\documentclass{article}

\usepackage{arxiv}

\usepackage[utf8]{inputenc} 
\usepackage[T1]{fontenc}    
\usepackage{hyperref}       
\usepackage{url}            
\usepackage{booktabs}       
\usepackage{amsfonts}       
\usepackage{nicefrac}       
\usepackage{microtype}      
\usepackage{lipsum}		
\usepackage{graphicx}
\usepackage{natbib}
\usepackage{doi}
\usepackage{amsmath}
\usepackage{amsthm}

\newtheorem{proposition}{Proposition}
\newcommand{\notimplies}{%
  \mathrel{{\ooalign{\hidewidth$\not\phantom{=}$\hidewidth\cr$\implies$}}}}
\usepackage{algorithm}
\usepackage{algorithmic}
\usepackage{enumitem}
\graphicspath{ {./figures/} }

\title{Learning Control-Affine Reduced-Order Models via Autoencoders}

\author{ \href{https://orcid.org/0000-0002-7940-8820}{\includegraphics[scale=0.06]{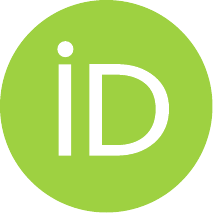}\hspace{1mm}Ali Mjalled} \\
	Automatic Control and Systems Theory\\
	Ruhr-Universit\"at Bochum\\
	Universit\"atstraße 150, 44801, Bochum \\
	\texttt{ali.mjalled@ruhr-uni-bochum.de} \\
	\And
	\href{https://orcid.org/0000-0002-9277-2061}{\includegraphics[scale=0.06]{orcid.pdf}\hspace{1mm}Martin M\"onnigmann} \\
	Automatic Control and Systems Theory\\
	Ruhr-Universit\"at Bochum\\
	Universit\"atstraße 150, 44801, Bochum \\
	\texttt{martin.moennigmann@ruhr-uni-bochum.de} \\
}

\renewcommand{\shorttitle}{\textit{arXiv} Template}

\hypersetup{
pdftitle={A template for the arxiv style},
pdfsubject={q-bio.NC, q-bio.QM},
pdfauthor={David S.~Hippocampus, Elias D.~Striatum},
pdfkeywords={First keyword, Second keyword, More},
}

\begin{document}
\maketitle

\begin{abstract}
    We present in this paper a framework for the identification of control-affine reduced-order models (ROMs).
    The proposed method utilizes autoencoders (AEs) to transform the high-dimensional states, and potentially the high-dimensional inputs, into reduced latent ones suitable for control-affine state-space dynamics.
    This is achieved by simultaneous training of the AE and the state-space model.
    In addition, we extend the discrete ROM formulation to a sequence-based model, which processes state and input histories to improve prediction accuracy while preserving the control-affine structure.
    We motivate our framework by applying feedback linearization to the derived models, and we present guidelines for its efficient use.
    The proposed framework is assessed on two numerical examples and its performance is compared to a baseline model, where the AE identifies a latent space with linear state-space dynamics.
    The assessment involves evaluating the prediction accuracy of the ROM on test data and its effectiveness in controlling the system to a desired state or trajectory.
\end{abstract}

\keywords{Reduced-order modeling \and Control affine \and Autoencoder \and Neural networks}

\section{Introduction}
The simulations of engineering systems governed by complex nonlinear PDEs are often computationally expensive.
High-fidelity models, derived from fine discretizations, are typically too expensive for applications like real-time optimization or control.
This challenge has motivated the development of reduced-order models (ROMs) over the past decades, which aim to capture essential dynamics while reducing the computational cost.

Traditional reduced-order modeling techniques, such as the proper orthogonal decomposition (POD) followed by Galerkin projection, rely on identifying a low-dimensional subspace that captures the dominant patterns and projecting the governing equations onto it to obtain a reduced set of ODEs, representing the ROM (see, e.g., \cite{benner2015survey} for a survey on projection-based methods).
However, a fundamental limitation of these methods is their reliance on explicit knowledge of the governing equations.
In many practical scenarios, only simulation data is available and the underlying equations are either unknown or too complex to manipulate.
Consequently, the task of model reduction becomes data-driven and aims to construct dynamical models purely from observations or simulated data.

Advances in machine learning have rapidly accelerated the development of data-driven reduced-order modeling.
These techniques often address key limitations of standard approaches.
For example, autoencoders (AEs) can identify nonlinear manifolds, providing a more expressive alternative to the linear subspaces produced by POD \citep{lee2020model}.
Similarly, numerous neural network architectures have been employed to model the intrinsic dynamics, including recurrent neural networks \citep{maulik2021reduced, hasegawa2020cnn}, temporal convolutional networks \citep{xia2023hierarchical, wu2020data}, neural ODEs \citep{mjalled2024improved, linot2022data}, and transformers \citep{solera2024beta, hemmasian2023reduced}.
These methods allow us to model the nonlinear dynamics within the reduced space.

System identification is closely related to data-driven reduced-order modeling, especially for systems with external inputs.
While system identification traditionally focuses on learning state-space models from input-output data \citep{masti2021learning}, the two fields converge when the output data is high-dimensional (e.g., camera frames). 
In such cases, learning a model directly in the high-dimensional space is computationally expensive, making a low-dimensional latent representation essential.
Consequently, the problem becomes one of jointly learning an intrinsic coordinate transformation and a state-space model within the resulting latent space.
Established methods like the dynamic mode decomposition identify linear latent models from high-dimensional snapshots \citep{proctor2016dynamic}.
More generally, nonlinear state-space models can also be learned.
For instance, \cite{wahlstrom2015learning} combined an AE with a NARX model to learn deep dynamical models from images, while \cite{beintema2021non} developed a framework to learn state-space models directly from video and input signals using a deep encoder.

While ROMs enable critical engineering applications such as design optimization \citep{park2013reduced}, state estimation \citep{sommer2023estimating}, control \citep{tomasetto2025latent}, and digital twin development \citep{kapteyn2022data}, their practical utility can be hindered by overly complex latent dynamics.
Although significant effort has been dedicated to discover interpretable latent representations using variational \citep{higgins2017beta} or deterministic \citep{schwarz2025disentangled} frameworks, the resulting temporal evolution can remain too complex.
When modeled with complex architectures, these dynamics yield models that are too computationally expensive for real-time use, such as in the model predictive control, where an optimization task must converge within a strict sampling time.
Furthermore, such dynamical models often lack interpretability.
To alleviate these challenges, recent research has focused on constraining the latent dynamics to specific structures.
A prominent approach enforces linear dynamics, thereby enabling the direct application of linear systems theory.
This objective is deeply connected to Koopman operator theory, which seeks eigenfunctions that provide intrinsic coordinates for a globally linear representation of nonlinear dynamics.
In this context, \cite{lusch2018deep} introduced a framework in which a modified AE discovers approximations of Koopman eigenfunctions from data, with extensions to handle systems with continuous spectra using an auxiliary network for frequency parameterization.
A concurrent approach by \cite{otto2019linearly} employs a linearly-recurrent AE architecture to learn low-dimensional Koopman-invariant subspaces, incorporating balanced model reduction and nonlinear reconstruction techniques.
Beyond purely autonomous dynamics, other works have enforced linearity for control purposes.
For example, \cite{ishize2024flow} imposed a linear latent structure with inputs to facilitate linear control design for flow control applications, while the earlier Embed to Control framework by \cite{watter2015embed} uses a variational AE framework that enforces a locally linear latent transition, enabling classical control design in the latent space.

Apart from imposing linearity, another approach replaces neural network dynamics with interpretable, closed-form models identified using the sparse identification of nonlinear dynamics (SINDy) method \citep{brunton2016discovering}, which uses sparse regression to identify a set of ODEs from a library of candidate functions.
This principle was first integrated with deep learning by \cite{champion2019data}, who used an AE to discover a latent space where dynamics admit a sparse representation.
Subsequent work by \cite{bakarji2023discovering} has extended on this idea and applied a similar framework to learn coordinate transformations from delay embedded spaces.
Likewise, \cite{mars2024bayesian} introduced a Bayesian SINDy AE to promote sparsity in the latent space and to provide uncertainty estimates for the identified dynamics.
More recently, \cite{gao2025sparse} proposed SINDy-SHRED, which is a recurrent AE architecture with SINDy regularization.
We note that SINDy has also been extend to model systems with inputs and demonstrated successful results to control low dimensional models \citep{brunton2025machine}.

While linear or sparse closed-form latent dynamics facilitate the modeling and control of high-dimensional systems, they pose their own limitations.
The assumption of linearity may be insufficient to capture complex nonlinear dynamics, especially in model reduction.
Koopman theory addresses this by lifting the state to a higher-dimensional space where dynamics are linear, but this contradicts the objective of reduction, where we do not aim to further increase the dimension of our states.
Conversely, while sparse models offer greater generality and interpretability, they are inherently nonlinear.
This necessitates complex nonlinear controllers, and their accuracy is highly sensitive to the choice of candidate functions in the SINDy library.
To overcome these challenges, we propose a novel ROM that enforces a nonlinear control-affine state-space structure on the latent dynamics, a formulation that, to the best of our knowledge, has not been previously explored for data-driven ROMs.
This approach offers a powerful compromise.
First, it retains the capacity for accurate nonlinear modeling while enabling the use of linear control techniques through feedback linearization \citep{isidori1995nonlinear}, after a suitable coordinate transformation performed using the AE.
Furthermore, the control-affine structure naturally aligns with a wide class of engineering systems where the dynamics are nonlinear in the state but affine in the control input. 

For control-affine systems, \cite{peitz2020data} proved that the latent dynamics is bilinear in the Koopman invariant subspace.
Building on this, \cite{chakrabarti2025temporally} proposed a bilinear recurrent AE by mapping to a Koopman-invariant subspace where the dynamics are bilinear.
In this work, we take a different path as we do not aim to learn an approximate Koopman-invariant subspace to obtain bilinear dynamics.
Instead, we directly learn an intrinsic coordinate in which the dynamics is nonlinear control-affine.

This paper is organized as follows.
Section \ref{sec:modeling} provides a formal problem definition and outlines the standard approach to build data-driven ROMs.
Section \ref{sec:control-affine} introduces the proposed control-affine ROM, while Sec.~\ref{sec:seq-based-affine} extends it to sequence-based formulation to improve prediction accuracy while preserving its structure.
Section \ref{sec:FL} presents feedback linearization guidelines for the proposed ROM.
Numerical examples are presented in Sec.~\ref{sec:examples}, and conclusions with outlooks are discussed in Sec.~\ref{sec:conclusions}.

\section{Data-driven reduced-order modeling}
\label{sec:modeling}
\subsection{Problem formulation}
We consider in this work discrete-time dynamical systems
\begin{equation}
\label{eq:high-dim-dyn}
    \mathbf{x}_{k+1} = \mathbf{F}(\mathbf{x}_k,\mathbf{u}_k),
\end{equation}
where $\mathbf{x}_k \in \mathcal{X}$ and $\mathbf{u}_k\in \mathbb{R}^m$ denote the state and input vectors of the system, respectively, at time $t_k=k\Delta t$, with fixed sampling interval $\Delta t$.
Here, $\mathcal{X}$ denotes a finite-dimensional Hilbert space equipped with an inner product $\langle\cdot,\cdot\rangle_{\mathcal{X}}$ and induced norm $\|\cdot\|_{\mathcal{X}}$.
This setting encompasses both vector-valued states ($\mathcal{X} = \mathbb{R}^n$ with the Euclidean inner product) and structured states such as spatial fields (e.g., $\mathcal{X} = \mathbb{R}^{N_X \times N_Y}$ with the Frobenius inner product).
The dynamics map $\mathbf{F}: \mathcal{X} \times \mathbb{R}^m \rightarrow \mathcal{X}$ is in general nonlinear and unknown; only discrete measurements of the states and inputs are available $\{\mathbf{x}_k, \mathbf{u}_k\}_{k=0}^{N-1}$, from which we seek an approximation $\hat{\mathbf{F}}$.

In high-dimensional settings ($\mathrm{dim}(\mathcal{X})=n \gg 1$), direct identification of $\mathbf{F}$ from data poses significant computational challenges due to the complexity and scale of the system.
Fortunately, many physical systems evolve on a low-dimensional manifold $\mathcal{M}\subset \mathcal{X}$, such that the dynamics is governed by a reduced number of degrees of freedom \citep{holmes2012turbulence}.
Specifically, we assume that $\mathcal{M}$ has intrinsic dimension $r \ll n$, and that the high-dimensional state $\mathbf{x}_k$ can be encoded by a latent vector $\mathbf{z}_k \in \mathbb{R}^r$ capturing its essential features, such that $\mathbf{z}_k = \mathcal{E}(\mathbf{x}_k)$, where $\mathcal{E}:\mathcal{X} \rightarrow \mathbb{R}^r$ is a dimensionality reduction map commonly referred to as the encoder.
Accordingly, the dynamics within the latent space are governed by a (generally unknown) reduced-order map $\mathbf{G}:\mathbb{R}^r \times \mathbb{R}^m \rightarrow\mathbb{R}^r$, i.e.,
\begin{equation}
    \label{eq:low-dim-dyn}
    \mathbf{z}_{k+1} = \mathbf{G}(\mathbf{z}_k,\mathbf{u}_k).
\end{equation}

To reconstruct the high-dimensional state from its latent representation, the inverse of $\mathcal{E}$ is required.
However, the mapping $\mathcal{E}$ cannot be injective and therefore does not admit a global inverse.
Instead, an approximate inverse mapping, commonly referred to as the decoder $\mathcal{D}$, is constructed.
Therefore, the retrieval of high-dimensional information from its low-dimensional counterpart, which is essential for reduced order modeling, is usually subject to an approximation error $\mathbf{e}_k$, expressed as:
\begin{equation}
    \label{eq:error}
    \mathbf{e}_k = \mathbf{x}_k - \Tilde{\mathbf{x}}_k = \mathbf{x}_k - \mathcal{D}(\mathcal{E}(\mathbf{x}_k)).
\end{equation}

Accordingly, the task of learning $\mathbf{F}$ from data is reformulated as identification of the encoder $\mathcal{E}$, the latent dynamics model $\mathbf{G}$, and the decoder $\mathcal{D}$.
In the following, we refer to the combination of $\mathcal{E}$, $\mathbf{G}$ and $\mathcal{D}$ as the ROM.  

\subsection{Decoupled neural network parameterization}
It is common practice to model the components $\mathcal{E}$, $\mathbf{G}$ and $\mathcal{D}$ using neural networks due to their ability to approximate nonlinear functions.
In the following, we denote these functions as $\mathcal{E}_\theta$, $\mathbf{G}_\varphi$ and $\mathcal{D}_\phi$, where $\theta, \varphi$ and $\phi$ represent the learnable parameters of each network, respectively.
When no prior structure is imposed on the latent dynamics, the training procedure is composed of two main decoupled steps.

\subsubsection{Autoencoder training}
Within this framework, the encoder $\mathcal{E}_\theta$ and decoder $\mathcal{D}_\phi$ are trained simultaneously using an AE structure, which is a special type of neural networks trained to reproduce its inputs \citep{rumelhart1985learning}.
The training parameters $\theta$ and $\phi$ are optimized by minimizing the reconstruction error given in \eqref{eq:error}.
Specifically, the following loss function is minimized:
\begin{equation}
    \label{eq:opt-ae-alone}
    \mathcal{L}_{\text{AE}} = \frac{1}{N} \sum_{k=0}^{N-1} \| \mathbf{x}_k - \mathcal{D}_\phi(\mathcal{E}_\theta(\mathbf{x}_k)) \|_{\mathcal{X}}^2,
\end{equation}
where $\|\cdot\|_{\mathcal{X}}$ denotes the norm induced by the inner product of $\mathcal{X}$.
Apart from the detailed neural network architectures and the hyperparameters of the optimizer algorithm, the main modeling hyperparameter at this stage is the latent dimension $r$.
This should be selected to ensure that the encoder $\mathcal{E}_\theta$ produces a compact latent representation that retains the essential information necessary for the reconstruction.
Depending on the structure of $\mathcal{X}$, the mappings $\mathcal{E}_\theta$ and $ \mathcal{D}_\phi$ may be parameterized using fully connected or convolutional neural networks

\subsubsection{Learning latent dynamics}
Once the AE is trained, the encoder $\mathcal{E}_{\theta}$ is used to transform all high-dimensional states $\{\mathbf{x}_k\}_{k=0}^{N-1}$ into their corresponding latent representations $\{\mathbf{z}_k=\mathcal{E}_{\theta}(\mathbf{x}_k)\}_{k=0}^{N-1}$.
The latent dynamics model $\mathbf{G}_\varphi$ is then trained to fit the evolution of the latent variables using an $M$-step rollout loss.
To evaluate the model performance over this horizon, we denote $\hat{\mathbf{z}}_{l|k}$ as the prediction at step $l$ ahead, starting from time $k$.
The loss function for optimizing the latent dynamics reads
\begin{equation}
\label{eq:opt-lat-dyn-alone}
\mathcal{L}_{\text{lat}} = \frac{1}{N-M} \sum_{k=0}^{N-M-1} \sum_{l=1}^M \| \mathbf{z}_{k+l} - \hat{\mathbf{z}}_{l|k} \|_2^2,
\end{equation}
where $\| \cdot \|_2$ denoted the L2 norm of a vector, and the predicted sequence is computed recursively starting from the encoded state $\mathbf{z}_k$:
\begin{equation*}
\hat{\mathbf{z}}_{0|k} := \mathbf{z}_k, \quad \hat{\mathbf{z}}_{l+1|k} := \mathbf{G}_\varphi(\hat{\mathbf{z}}_{l|k}, \mathbf{u}_{k+l}), \quad l=0, \dots, M-1.
\end{equation*}
In addition to the rollout horizon $M$ and the hyperparameters of the optimizer, the architecture and the structure of $\mathbf{G}_\varphi$ play a crucial role in determining the performance of the model.
A variety of neural architectures have been employed to model the nonlinear latent dynamics, including LSTM networks \citep{mjalled2023reduced}, temporal convolutional networks \citep{xia2023hierarchical}, and transformers \citep{hemmasian2023reduced}, to name just a few.
Alternatively, instead of allowing $\mathbf{G}_\varphi$ to represent an arbitrary nonlinear function, it is often beneficial to restrict it to a specific family of functions.
This reduces the search space during optimization and can lead to models that are more interpretable and generalizable.

\section{Control-affine latent dynamics}
\label{sec:control-affine}
In this work, we model the latent dynamics as a nonlinear control-affine discrete system in the form
\begin{equation}
    \label{eq:latent-affine}
    \mathbf{z}_{k+1} = \mathbf{a}_{\varphi_1}(\mathbf{z}_k) + \mathbf{B}_{\varphi_2}(\mathbf{z}_k)\cdot \mathbf{u}_k,
\end{equation}
where $\mathbf{a}_{\varphi_1}:\mathbb{R}^r \rightarrow \mathbb{R}^r$ is the drift network, parameterized with $\varphi_1$, and $\mathbf{B}_{\varphi_2}:\mathbb{R}^r \rightarrow\mathbb{R}^{r \times m}$ is the input network, parameterized with $\varphi_2$.
This structure decomposes the latent dynamics into an autonomous (drift) part, which governs the temporal evolution in the absence of control inputs, and a control-influenced part, which interacts with the control signal $\mathbf{u}_k$ in an affine manner.
Imposing the control-affine structure for the latent dynamics is advantageous for many reasons.
First, it introduces an inductive bias that aligns with many real world physical systems, e.g., robot manipulators, aerial and ground vehicles, chemical reactors, etc. 
Second, it enables the use of well-established analysis and control methods tailored for this family of models, specifically, feedback linearization (Sec.~\ref{sec:FL}).
Finally, this structure facilitates interpretability and modularity, allowing to analyze or improve each part independently.

The decoupled training approach introduced earlier for general nonlinear latent dynamics does not explicitly account for structural constraints on the latent model.
In particular, when the latent dynamics are restricted to a control-affine form as in \eqref{eq:latent-affine}, an autoencoder trained independently may emphasize reconstruction of features that are not well aligned with this structure.
As a result, the learned latent representation may not be ideally suited for approximating the dynamics under the imposed control-affine assumption.
To address this, we adopt a joint training strategy in which the encoder, decoder, and latent dynamics model are optimized simultaneously.
This setup promotes the identification of a latent representation that both reconstructs the data and is compatible with the chosen control-affine structure.
The multi-objective loss function used for this purpose reads
\begin{equation}
    \label{eq:mult-obj-loss}
    \mathcal{L} = \gamma_1 \mathcal{L}_{\text{AE}} + \gamma_2 \mathcal{L}_{\text{lat-cons}} + \gamma_3\mathcal{L}_{\text{dyn}},
\end{equation}
where:
\begin{itemize}
    \item $\mathcal{L}_{\text{AE}}$ is the autoencoder loss given in \eqref{eq:opt-ae-alone},
\item $\mathcal{L}_{\text{lat-cons}}$ is the latent consistency loss obtained from \eqref{eq:opt-lat-dyn-alone} by substituting $\mathbf{z}_{k+l} = \mathcal{E}_\theta(\mathbf{x}_{k+l})$, i.e.,
\begin{equation}
\label{eq:loss-lat-dyn}
\mathcal{L}_{\text{lat-cons}} = \frac{1}{N-M} \sum_{k=0}^{N-M-1} \sum_{l=1}^M \| \mathcal{E}_\theta(\mathbf{x}_{k+l}) - \hat{\mathbf{z}}_{l|k} \|_2^2,
\end{equation}
\item $\mathcal{L}_{\text{dyn}}$ is the end-to-end loss coupling all components together in the high-dimensional space, i.e.,
\begin{equation}
\label{eq:loss-dyn}
\mathcal{L}_{\text{dyn}} = \frac{1}{N-M} \sum_{k=0}^{N-M-1} \sum_{l=1}^{M} \left\| \mathbf{x}_{k+l} - \mathcal{D}_\phi(\hat{\mathbf{z}}_{l|k}) \right\|_{\mathcal{X}}^2.
\end{equation}
\end{itemize}
All rollout predictions $\hat{\mathbf{z}}_{l|k}$ are computed recursively using the control-affine model:
\begin{equation}
\label{eq:rollouts}
\hat{\mathbf{z}}_{0|k} := \mathcal{E}_\theta(\mathbf{x}_k), \quad
\hat{\mathbf{z}}_{l+1|k} := \mathbf{a}_{\varphi_1}(\hat{\mathbf{z}}_{l|k}) + \mathbf{B}_{\varphi_2}(\hat{\mathbf{z}}_{l|k})\cdot \mathbf{u}_{k+l}, \quad l = 0,\dots,M-1.
\end{equation}
The scalars $\gamma_1$, $\gamma_2$, and $\gamma_3$ are weighting coefficients controlling the trade-off between reconstruction accuracy, latent consistency and overall performance.
While it is possible to obtain an accurate ROM by minimizing a loss composed of $\mathcal{L}_{\text{AE}}$ and $\mathcal{L}_{\text{dyn}}$, the inclusion of $\mathcal{L}_{\text{lat-cons}}$ term is essential for control applications.
Without this term, decoupling the AE and the dynamical model is not possible as there is not guarantee that the learned dynamics within the latent space is consistent with the ground-truth encoded dynamics.
In such cases, even if the high-dimensional predictions appear accurate, this might be due to the decoder compensating for latent representations that do not follow the true dynamics.
Therefore, minimizing $\mathcal{L}_{\text{lat-cons}}$ not only enforces consistency, but enables analysis and control directly in the reduced space.
We note that minimizing $\mathcal{L}_{\text{lat-cons}}$ helps reduce $\mathcal{L}_{\text{dyn}}$, but the opposite is not guaranteed.
This relationship is formalized in the following proposition.

\begin{proposition}
    \label{prop:loss}
Let \(\mathcal{L}_{\mathrm{lat\text{-}cons}}\) be the latent dynamics loss defined in \eqref{eq:loss-lat-dyn} and \(\mathcal{L}_{\mathrm{dyn}}\) the end-to-end ROM loss defined in \eqref{eq:loss-dyn}.  Assume that the autoencoder achieves perfect reconstruction on the training set, i.e.,
\[
\mathcal{D}_\phi\bigl(\mathcal{E}_\theta(\mathbf{x})\bigr)=\mathbf{x}
\quad\forall\,\mathbf{x}\text{ in the training set}.
\]
Then
\[
\mathcal{L}_{\mathrm{lat\text{-}cons}}=0
\;\implies\;
\mathcal{L}_{\mathrm{dyn}}=0,
\qquad
\mathcal{L}_{\mathrm{dyn}}=0
\;\notimplies \;
\mathcal{L}_{\mathrm{lat\text{-}cons}}=0.
\]    
\end{proposition}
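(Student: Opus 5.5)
For the forward implication, I will use that both losses are finite sums of nonnegative terms. Hence $\mathcal{L}_{\mathrm{lat\text{-}cons}}=0$ forces every summand to vanish:
\[
\hat{\mathbf{z}}_{l|k}=\mathcal{E}_\theta(\mathbf{x}_{k+l})\quad\text{for all } k=0,\dots,N-M-1,\ l=1,\dots,M.
\]
I will substitute this into each summand of \eqref{eq:loss-dyn}, giving $\mathcal{D}_\phi(\hat{\mathbf{z}}_{l|k})=\mathcal{D}_\phi(\mathcal{E}_\theta(\mathbf{x}_{k+l}))$. Since $\mathbf{x}_{k+l}$ is a training snapshot (the index $k+l\le N-1$ stays in range), the perfect-reconstruction hypothesis gives $\mathcal{D}_\phi(\mathcal{E}_\theta(\mathbf{x}_{k+l}))=\mathbf{x}_{k+l}$. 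Therefore every summand of $\mathcal{L}_{\mathrm{dyn}}$ is zero, and so $\mathcal{L}_{\mathrm{dyn}}=0$. No property of the rollout recursion \eqref{eq:rollouts} is needed here beyond the identity above.

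For the non-implication, I will construct a counterexample, which is where the actual content lies. The key observation is that $\mathcal{L}_{\mathrm{dyn}}=0$ only requires $\mathcal{D}_\phi(\hat{\mathbf{z}}_{l|k})=\mathbf{x}_{k+l}$. This can hold when $\hat{\mathbf{z}}_{l|k}\neq\mathcal{E}_\theta(\mathbf{x}_{k+l})$, provided $\mathcal{D}_\phi$ is not injective. Such a decoder is compatible with perfect reconstruction, because that hypothesis only constrains $\mathcal{D}_\phi$ on the finite set $\mathcal{E}_\theta(\{\mathbf{x}_k\})$.

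The counterexample I plan to use is:
\begin{itemize}
\item $\mathcal{X}=\mathbb{R}$, $r=2$, $m=1$, $N=2$, $M=1$;
\item data $\mathbf{x}_0=0$, $\mathbf{x}_1=1$, $\mathbf{u}_0=0$;
\item encoder $\mathcal{E}_\theta(x)=(x,0)^\top$ and decoder $\mathcal{D}_\phi(z)=z_1$, so reconstruction is exact for all $x$;
\item control-affine model $\mathbf{a}_{\varphi_1}(z)=(1,1)^\top$ (a constant) and $\mathbf{B}_{\varphi_2}\equiv 0$.
\end{itemize}
With these choices, $\hat{\mathbf{z}}_{1|0}=(1,1)^\top$ and $\mathcal{D}_\phi(\hat{\mathbf{z}}_{1|0})=1=\mathbf{x}_1$, so $\mathcal{L}_{\mathrm{dyn}}=0$. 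On the other hand, $\mathcal{E}_\theta(\mathbf{x}_1)=(1,0)^\top$, so $\mathcal{L}_{\mathrm{lat\text{-}cons}}=\|(0,1)^\top\|_2^2=1\neq 0$.

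The main obstacle is making sure the counterexample respects every structural constraint in the statement: the models are neural-network parameterized, the dynamics are control-affine, and the hypothesis is perfect reconstruction on the training set. I will address this by noting three points:
\begin{itemize}
\item Linear and constant maps are realizable by networks (linear activations or zero weights plus biases).
\item A constant drift with zero input matrix is a special case of \eqref{eq:latent-affine}.
\item The failure comes from the latent component lying in the kernel direction of $\mathcal{D}_\phi$, which the decoder cannot see.
\end{itemize}
I will close by remarking that this is exactly the mechanism described before the proposition: the decoder masks a latent trajectory that departs from the encoded one.
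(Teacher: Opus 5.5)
Your forward implication is the same as the paper's. The nonnegative summands of $\mathcal{L}_{\mathrm{lat\text{-}cons}}$ vanish, giving $\hat{\mathbf{z}}_{l|k}=\mathcal{E}_\theta(\mathbf{x}_{k+l})$, and perfect reconstruction then makes every summand of $\mathcal{L}_{\mathrm{dyn}}$ zero; your check that $k+l\le N-1$ is a small addition.

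For the non-implication you use the same mechanism as the paper: $\mathcal{D}_\phi$ need not be injective off the encoder's image. The paper stops at saying that some $\hat{\mathbf{z}}_{l|k}\neq\mathcal{E}_\theta(\mathbf{x}_{k+l})$ with $\mathcal{D}_\phi(\hat{\mathbf{z}}_{l|k})=\mathbf{x}_{k+l}$ ``may exist''. You instead build an explicit witness and compute both losses, getting $\mathcal{L}_{\mathrm{dyn}}=0$ and $\mathcal{L}_{\mathrm{lat\text{-}cons}}=1$; the example checks out.

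Your route is more rigorous. A non-implication needs an actual instance, and the paper never verifies that the rollout recursion \eqref{eq:rollouts} of a control-affine model, started from $\mathcal{E}_\theta(\mathbf{x}_k)$, can reach such a point. Your constant drift does so by construction. Your example also makes the mechanism explicit: a latent direction to which $\mathcal{D}_\phi$ is blind. The paper instead appeals to $r\ll n$, which by itself does not make a map $\mathbb{R}^r\to\mathcal{X}$ non-injective. The paper's version, for its part, describes the generic situation without committing to a toy model.

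One minor point: your example has $r=2>n=1$, which is outside the paper's standing assumption $r\ll n$. Padding fixes it. Take $\mathcal{X}=\mathbb{R}^n$ with $n\ge 3$, $\mathbf{x}_0=\mathbf{0}$, $\mathbf{x}_1=\mathbf{e}_1$, $\mathcal{E}_\theta(\mathbf{x})=(x^{(1)},0)^\top$, $\mathcal{D}_\phi(\mathbf{z})=z_1\mathbf{e}_1$, and the same $\mathbf{a}_{\varphi_1}$ and $\mathbf{B}_{\varphi_2}$. This again gives $\mathcal{L}_{\mathrm{dyn}}=0$ and $\mathcal{L}_{\mathrm{lat\text{-}cons}}=1$.
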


\begin{proof}
We prove each implication in turn.

\medskip\noindent\textbf{(i) \(\mathcal{L}_{\mathrm{lat\text{-}cons}}=0\implies\mathcal{L}_{\mathrm{dyn}}=0\).}
By definition of \(\mathcal{L}_{\mathrm{lat\text{-}dyn}}\) in \eqref{eq:loss-lat-dyn},  
\[
\mathcal{L}_{\mathrm{lat\text{-}cons}}=0
\quad\Longrightarrow\quad
\hat{\mathbf{z}}_{l|k}
=\mathcal{E}_\theta(\mathbf{x}_{k+l})
\quad\forall\,k,l.
\]
Applying the perfect‐reconstruction assumption of the AE we get,
\[
\mathcal{D}_\phi\bigl(\hat{\mathbf{z}}_{l|k}\bigr)
=\mathcal{D}_\phi\bigl(\mathcal{E}_\theta(\mathbf{x}_{k+l})\bigr)
=\mathbf{x}_{k+l}.
\]
Hence each term in \eqref{eq:loss-dyn} vanishes, and so \(\mathcal{L}_{\mathrm{dyn}}=0\).

\medskip\noindent\textbf{(ii) \(\mathcal{L}_{\mathrm{dyn}}=0\notimplies\mathcal{L}_{\mathrm{lat\text{-}cons}}=0\).} 
Suppose \(\mathcal{L}_{\mathrm{dyn}}=0\).  Then by \eqref{eq:loss-dyn},
\[
\mathcal{D}_\phi\bigl(\hat{\mathbf{z}}_{l|k}\bigr)
=\mathbf{x}_{k+l}
\quad\forall\,k,l.
\]
However, since \(r\ll n\) the decoder \(\mathcal{D}_\phi\) is generally many‑to‑one and need not be injective off the encoder’s image.  Thus there may exist
\(\hat{\mathbf{z}}_{l|k}\neq\mathcal{E}_\theta(\mathbf{x}_{k+l})\)
such that
\(\mathcal{D}_\phi(\hat{\mathbf{z}}_{l|k})=\mathbf{x}_{k+l}\).
In such a case \(\mathcal{L}_{\mathrm{dyn}}=0\), yet
\(\|\mathcal{E}_\theta(\mathbf{x}_{k+l})-\hat{\mathbf{z}}_{l|k}\|_2^2>0\)
for some \(k,l\), so \(\mathcal{L}_{\mathrm{lat\text{-}dyn}}>0\).
\end{proof}

In many engineering applications, the input vector $\mathbf{u}_k$ is high-dimensional, for example when actuation is distributed in space or represented by boundary fields.
Similar to the dimensionality reduction of the state vector $\mathbf{x}_k$, it is often advantageous to learn a compact latent representation for the inputs as well.
To this end, we introduce a latent input vector $\mathbf{u}^\prime \in \mathbb{R}^{m^\prime}$ with $m^\prime \ll m$, obtained using a dedicated input encoder.
This additional task could be easily integrated into our end-to-end framework to learn control-affine latent dynamics.
Specifically, we augment the multi-objective loss function given in \eqref{eq:mult-obj-loss} with a control autoencoder loss term $\mathcal{L}_{\mathrm{C-AE}}$ weighted by $\gamma_4$, which reads as
\begin{equation}
    \label{eq:control-AE}
    \mathcal{L}_{\mathrm{C-AE}} = \frac{1}{N} \sum_{k=0}^{N-1} \| \mathbf{u}_k - \mathcal{D}^\prime_{\phi^\prime}(\mathcal{E}^\prime_{\theta^\prime}(\mathbf{u}_k)) \|_2^2,
\end{equation}
where $\mathcal{E}^\prime$ and $\mathcal{D}^\prime$ represent the input encoder and decoder, parameterized with $\theta^\prime$ and $\phi^\prime$, respectively.
After introducing the latent input representation, the control-affine latent dynamics in \eqref{eq:latent-affine} and the rollout model in \eqref{eq:rollouts} are expressed in terms of $\mathbf{u}^\prime_k=\mathcal{E}^\prime_{\theta^\prime}(\mathbf{u}_k)$ instead of the high-dimensional input vector $\mathbf{u}_k$, i.e.,
\begin{equation}
\label{eq:rollouts-with-input-encoder}
\hat{\mathbf{z}}_{0|k} := \mathcal{E}_\theta(\mathbf{x}_k), \quad
\hat{\mathbf{z}}_{l+1|k} := \mathbf{a}_{\varphi_1}(\hat{\mathbf{z}}_{l|k}) + \mathbf{B}_{\varphi_2}(\hat{\mathbf{z}}_{l|k})\cdot \mathcal{E}^\prime_{\theta^\prime}(\mathbf{u}_{k+l}), \quad l = 0,\dots,M-1.
\end{equation}
All architectural dimensions for the drift and input networks are adjusted accordingly.

In this work, we propose training the ROM using a two-stage strategy.
First, the AE components are pretrained for few epochs, ensuring that the latent representation for the states (and input, if applicable) are well-initialized.
Subsequently, the full ROM is trained by minimizing \eqref{eq:mult-obj-loss} until convergence using the pretrained AE parameters as initializations and manually tuning the loss weights $\gamma_i$.
We note that the model's performance is sensitive to the choice of these weighting coefficients, which must be tuned properly.
Alternatively, more sophisticated multi-objective optimization algorithms could be used instead of manual tuning such as the self-adaptive method \citep{mcclenny2023self} or gradient normalization \citep{chen2018gradnorm}.

\section{Sequence-based control-affine latent dynamics}
\label{sec:seq-based-affine}
Predicting the future discrete (latent) state vector based solely on the current one is very challenging. 
Accordingly, autoregressive models typically address this by processing a sequence of past states to predict future values, which are maintained using a memory mechanism for subsequent evaluations.
We therefore adapt our dynamical model to operate on sequences of latent vectors and control inputs, while preserving the control-affine structure.
To this end, we introduce the extended state vector $\boldsymbol{\xi}_k$ defined as:
\begin{equation}
    \label{eq:extended-state}
    \boldsymbol{\xi}_k = [\mathbf{z}^T_{k-H}, \cdots, \mathbf{z}^T_{k-1}, \mathbf{u}^T_{k-H}, \cdots, \mathbf{u}^T_{k-1},\mathbf{z}_k^T]^T,
\end{equation}
where $H$ is a hyperparameter denoting the sequence length of previous latent vectors and inputs. 
By incorporating past inputs alongside past states, the extended state vector provides a complete information set for both delay-free and time-delay systems, making the approach applicable to a strictly broader class of dynamical systems.
We highlight that the current input $\mathbf{u}_k$ is not defined in $\boldsymbol{\xi}_k$ because we seek a control-affine model in the form
\begin{equation}
    \label{eq:affine-extended}
    \boldsymbol{\xi}_{k+1} = \boldsymbol{\alpha}(\boldsymbol{\xi}_k) + \boldsymbol{\beta}(\boldsymbol{\xi}_k)\cdot\mathbf{u}_k,
\end{equation}
where $\boldsymbol{\alpha}:\mathbb{R}^{d} \rightarrow \mathbb{R}^{d}$ and $\boldsymbol{\beta}:\mathbb{R}^{d} \rightarrow \mathbb{R}^{d \times m}$, with $d=(H+1)r + H m$.
According to \eqref{eq:extended-state}, the function $\boldsymbol{\alpha}$ acts primarily as a linear shifting operator that encodes the memory update, while also incorporating a nonlinear drift term for updating $\mathbf{z}_k$, similar to \eqref{eq:latent-affine}. Thus, we structure $\boldsymbol{\alpha}(\boldsymbol{\xi}_k)$ as:
\begin{equation}
\label{eq:alpha}
\boldsymbol{\alpha}(\boldsymbol{\xi}_k) = \mathbf{S} \boldsymbol{\xi}_k + \begin{bmatrix} \mathbf{0}_{(H r + H m)} \\ \widetilde{\mathbf{a}}_{\varphi_1}(\boldsymbol{\xi}_k) \end{bmatrix},
\end{equation}
where $\mathbf{S} \in \mathbb{R}^{d \times d}$ is a block-shift matrix that advances the memory sequences, and $\widetilde{\mathbf{a}}_{\varphi_1}:\mathbb{R}^d \rightarrow\mathbb{R}^r$ is a neural network parameterized by $\varphi_1$.
On the other hand, $\boldsymbol{\beta}$ models the influence of the input on the latent dynamics through the extended vector $\boldsymbol{\xi}_k$. It populates the memory buffer with the current input $\mathbf{u}_k$ and applies the control matrix to the latent state update:
\begin{equation}
\label{eq:beta}
\boldsymbol{\beta}(\boldsymbol{\xi}_k) =
\begin{bmatrix}
\mathbf{0}_{Hr \times m} \\
\mathbf{0}_{(H-1)m \times m} \\
\mathbf{I}_{m \times m} \\
\widetilde{\mathbf{B}}_{\varphi_2}(\boldsymbol{\xi}_k)
\end{bmatrix} \in \mathbb{R}^{d \times m},
\end{equation}
where $\widetilde{\mathbf{B}}_{\varphi_2}:\mathbb{R}^d \rightarrow \mathbb{R}^{r \times m}$ is a neural network parameterized by $\varphi_2$.

\begin{proposition}
The extended state transition \eqref{eq:affine-extended} governed by $\boldsymbol{\alpha}$ and $\boldsymbol{\beta}$ defined in \eqref{eq:alpha} and \eqref{eq:beta} constitutes a control-affine representation of the sequence-based dynamics.
\end{proposition}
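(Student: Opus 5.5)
The plan is to verify the claim directly by block-wise computation. I will write $\boldsymbol{\xi}_{k+1}$ from the definition \eqref{eq:extended-state} with the index shifted by one, and compare it block by block with $\mathbf{S}\boldsymbol{\xi}_k + [\mathbf{0}^T,\widetilde{\mathbf{a}}_{\varphi_1}(\boldsymbol{\xi}_k)^T]^T + \boldsymbol{\beta}(\boldsymbol{\xi}_k)\mathbf{u}_k$. Two things must be shown. First, the right-hand side reproduces the memory update exactly. Second, the only non-trivial block, the update of $\mathbf{z}_{k+1}$, is the control-affine latent model $\mathbf{z}_{k+1}=\widetilde{\mathbf{a}}_{\varphi_1}(\boldsymbol{\xi}_k)+\widetilde{\mathbf{B}}_{\varphi_2}(\boldsymbol{\xi}_k)\mathbf{u}_k$. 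This is the sequence-based analogue of \eqref{eq:latent-affine}, with the drift and input maps depending on the history.

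First I will fix $\mathbf{S}$ explicitly. The state history $[\mathbf{z}_{k-H};\dots;\mathbf{z}_{k-1};\mathbf{z}_k]$ is stored in two separate places, the first $Hr$ entries and the last $r$ entries. So $\mathbf{S}$ must map block $i+1$ to block $i$ for $i=1,\dots,H-1$ in the state part, and map the trailing $\mathbf{z}_k$ block into position $H$. In the input part it shifts $\mathbf{u}_{k-H+i}$ to position $i$ for $i=1,\dots,H-1$. All other rows are zero, in particular the $H$-th input row block and the last $r$ rows.

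With this $\mathbf{S}$, each block of $\boldsymbol{\xi}_{k+1}$ is accounted for as follows.
\begin{itemize}
\item The state-memory blocks $\mathbf{z}_{k-H+1},\dots,\mathbf{z}_k$ come from $\mathbf{S}\boldsymbol{\xi}_k$ alone. The drift padding and the first $Hr$ rows of $\boldsymbol{\beta}$ are zero there.
\item The input-memory blocks $\mathbf{u}_{k-H+1},\dots,\mathbf{u}_{k-1}$ come from $\mathbf{S}$. The $H$-th input block $\mathbf{u}_k$ comes solely from the identity block $\mathbf{I}_{m\times m}$ in \eqref{eq:beta} multiplying $\mathbf{u}_k$.
\item The last block equals $\widetilde{\mathbf{a}}_{\varphi_1}(\boldsymbol{\xi}_k)+\widetilde{\mathbf{B}}_{\varphi_2}(\boldsymbol{\xi}_k)\mathbf{u}_k$, because the last $r$ rows of $\mathbf{S}$ vanish.
\end{itemize}
Summing the blocks recovers \eqref{eq:affine-extended}. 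By construction $\boldsymbol{\alpha}$ and $\boldsymbol{\beta}$ are functions of $\boldsymbol{\xi}_k$ only, since $\mathbf{S}$ is constant and $\widetilde{\mathbf{a}}$, $\widetilde{\mathbf{B}}$ take $\boldsymbol{\xi}_k$ as input. Hence the map is affine in $\mathbf{u}_k$ for every fixed $\boldsymbol{\xi}_k$, which is the defining property of a control-affine system.

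The main obstacle is the index bookkeeping in $\mathbf{S}$. The ordering in \eqref{eq:extended-state} places the past inputs between the past states and $\mathbf{z}_k$, so the shift is not a plain block-subdiagonal. The state memory must receive $\mathbf{z}_k$ from the final block, across the input segment. I would handle this by writing $\mathbf{S}$ as a $3\times3$ block matrix over the segments (state memory, input memory, current state) and checking each segment separately. I would also note the degenerate case $H=1$, where the $\mathbf{0}_{(H-1)m\times m}$ block is empty.
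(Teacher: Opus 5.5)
Your proposal is correct and follows the paper's proof essentially step for step. The paper likewise writes $\mathbf{S}$ as a $3\times 3$ block matrix over the segments (state memory, input memory, current state), using block up-shift matrices $\mathbf{S}_z$, $\mathbf{S}_u$ and an off-diagonal block $\mathbf{E}_z$ that carries $\mathbf{z}_k$ into the last history slot; it then checks $\boldsymbol{\alpha}(\boldsymbol{\xi}_k)+\boldsymbol{\beta}(\boldsymbol{\xi}_k)\mathbf{u}_k$ against $\boldsymbol{\xi}_{k+1}$ block by block, and concludes control-affinity because $\boldsymbol{\alpha}$ and $\boldsymbol{\beta}$ depend only on $\boldsymbol{\xi}_k$. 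Your remark on the degenerate case $H=1$ is a harmless addition the paper omits, and your observation that $\mathbf{S}$ is not a plain block shift is more accurate than the paper's description of it as ``block-diagonal.''
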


\begin{proof}
We verify by direct substitution that $\boldsymbol{\alpha}(\boldsymbol{\xi}_k)+\boldsymbol{\beta}(\boldsymbol{\xi}_k)\mathbf{u}_k$ recovers $\boldsymbol{\xi}_{k+1}$ entry-by-entry.

The linear shift matrix $\mathbf{S} \in \mathbb{R}^{d \times d}$ in \eqref{eq:alpha} is constructed to shift the state and input sequences forward by one time step.
We partition $\boldsymbol{\xi}_k$ into the state history $\mathbf{Z}_{k} = [\mathbf{z}^T_{k-H}, \dots, \mathbf{z}^T_{k-1}]^T \in \mathbb{R}^{Hr}$, the input history $\mathbf{U}_{k} = [\mathbf{u}^T_{k-H}, \dots, \mathbf{u}^T_{k-1}]^T \in \mathbb{R}^{Hm}$, and the current state $\mathbf{z}_k \in \mathbb{R}^r$. The matrix $\mathbf{S}$ takes the block-diagonal form:
\begin{equation*}
\mathbf{S} = \begin{bmatrix}
\mathbf{S}_z & \mathbf{0}_{Hr \times Hm} & \mathbf{E}_z \\
\mathbf{0}_{Hm \times Hr} & \mathbf{S}_u & \mathbf{0}_{Hm \times r} \\
\mathbf{0}_{r \times Hr} & \mathbf{0}_{r \times Hm} & \mathbf{0}_{r \times r}
\end{bmatrix},
\end{equation*}
where $\mathbf{S}_z \in \mathbb{R}^{Hr \times Hr}$ and $\mathbf{S}_u \in \mathbb{R}^{Hm \times Hm}$ are block up-shift matrices (with identity matrices $\mathbf{I}_{r\times r}$ and $\mathbf{I}_{m \times m}$ on their upper sub-diagonals, respectively, and zeros elsewhere), and $\mathbf{E}_z \in \mathbb{R}^{Hr \times r}$ contains $\mathbf{I}_{r \times r}$ in its bottom block row to move $\mathbf{z}_k$ into the history buffer. Applying $\mathbf{S}$ yields:
\begin{equation}
\label{eq:shifted}
\mathbf{S} \boldsymbol{\xi}_k = \begin{bmatrix} \mathbf{z}_{k-H+1} \\ \vdots \\ \mathbf{z}_{k} \\ \mathbf{u}_{k-H+1} \\ \vdots \\ \mathbf{u}_{k-1} \\ \mathbf{0}_{m} \\ \mathbf{0}_{r} \end{bmatrix}.
\end{equation}
Substituting \eqref{eq:shifted} into \eqref{eq:alpha} and applying \eqref{eq:affine-extended}, we obtain the full update:
\begin{equation}
\boldsymbol{\xi}_{k+1} =
\underbrace{\begin{bmatrix} \mathbf{z}_{k-H+1} \\ \vdots \\ \mathbf{z}_{k} \\ \mathbf{u}_{k-H+1} \\ \vdots \\ \mathbf{u}_{k-1} \\ \mathbf{0}_m \\ \widetilde{\mathbf{a}}_{\varphi_1}(\boldsymbol{\xi}_k) \end{bmatrix}}_{\boldsymbol{\alpha}(\boldsymbol{\xi}_k)}
+
\underbrace{\begin{bmatrix} \mathbf{0}_{r \times m} \\ \vdots \\ \mathbf{0}_{r \times m} \\ \mathbf{0}_{m \times m} \\ \vdots \\ \mathbf{0}_{m \times m} \\ \mathbf{I}_{m \times m} \\ \widetilde{\mathbf{B}}_{\varphi_2}(\boldsymbol{\xi}_k) \end{bmatrix}}_{\boldsymbol{\beta}(\boldsymbol{\xi}_k)} \mathbf{u}_k
=
\begin{bmatrix} \mathbf{z}_{k-H+1} \\ \vdots \\ \mathbf{z}_{k} \\ \mathbf{u}_{k-H+1} \\ \vdots \\ \mathbf{u}_{k-1} \\ \mathbf{u}_k \\ \widetilde{\mathbf{a}}_{\varphi_1}(\boldsymbol{\xi}_k) + \widetilde{\mathbf{B}}_{\varphi_2}(\boldsymbol{\xi}_k)\mathbf{u}_k \end{bmatrix}.
\end{equation}
The first $H$ block rows correctly represent the shifted latent state history, ending with $\mathbf{z}_k$. The subsequent $H$ block rows represent the shifted input history, where the term $\mathbf{I}_{m \times m} \mathbf{u}_k$ from $\boldsymbol{\beta}$ correctly inserts the current input into the final position of the memory buffer. Finally, the last block row yields the latent state update $\mathbf{z}_{k+1} := \widetilde{\mathbf{a}}_{\varphi_1}(\boldsymbol{\xi}_k) + \widetilde{\mathbf{B}}_{\varphi_2}(\boldsymbol{\xi}_k)\mathbf{u}_k$. Because $\boldsymbol{\alpha}$ and $\boldsymbol{\beta}$ depend solely on $\boldsymbol{\xi}_k$ and act linearly with respect to $\mathbf{u}_k$, the transition is control-affine in $(\boldsymbol{\xi}_k, \mathbf{u}_k)$.
\end{proof}

The sequence-based control affine ROM could be trained using the same loss function given in \eqref{eq:mult-obj-loss}, with adapting the neural networks of the control-affine model to operate on $\boldsymbol{\xi}_k$ instead of $\mathbf{z}_k$. 
We note that the encoder network processes every high-dimensional state vector independently (see Algorithm~\ref{alg:rom-training}).

When an input AE is employed, the sequence-based formulation is modified in a straightforward manner, where the input vector $\mathbf{u}_k \in \mathbb{R}^{m}$ is replaced by $\mathbf{u}^\prime_k=\mathcal{E}^\prime_{\theta^\prime}(\mathbf{u}_k) \in \mathbb{R}^{m^\prime}$ and the dimensions are adjusted accordingly.
The full training procedure is summarized in Algorithm~\ref{alg:rom-training}.
Once the ROM is trained, it can be deployed to simulate system dynamics for a new set of initial conditions and input signals.
The full inference procedure is illustrated in Fig.~\ref{fig:framework} for the case with high-dimensional inputs and $\mathcal{X}=\mathbb{R}^n$. 
Note that the input decoder does not appear in Fig.~\ref{fig:framework} as the dynamics is predicted from the latent input sequence.
However, it is important to have a trained input decoder to have a seamless transformation between the physical and latent input space, which is relevant for control applications where inputs must be applied in the physical space.
For the case with low-dimensional input vector, the input autoencoder is omitted, and the latent dynamics is predicted directly from the physical input sequence.

\begin{algorithm}[!ht]
\caption{Training of Sequence-Based Control-Affine ROM (with optional Input Autoencoder)}
\label{alg:rom-training}
\begin{algorithmic}[1]
\REQUIRE Dataset $\{\mathbf{x}_k,\mathbf{u}_k\}_{k=0}^{N-1}$, rollout steps $M$, history length $H$, 
number of pretraining epochs $E_1$, number of ROM training epochs $E_2$, 
weighting coefficients $\gamma_1,\gamma_2,\gamma_3,(\gamma_4)$
\ENSURE Trained parameters $\theta,\phi,\varphi_1,\varphi_2,( \theta',\phi')$
\vspace{0.5em}

\STATE \textbf{Stage 1: Autoencoder Pretraining}
\STATE Randomly initialize $\theta,\phi$ and, if used, $\theta',\phi'$
\FOR{epoch $=1,\cdots,E_1$}
  \FOR{$k = 0,\cdots,N-1$}
    \STATE $\mathbf{z}_k \gets \mathcal{E}_\theta(\mathbf{x}_k)$
    \STATE $\hat{\mathbf{x}}_k \gets \mathcal{D}_\phi(\mathbf{z}_k)$
    \STATE Compute state AE loss $\mathcal{L}_{\text{AE}} = \|\hat{\mathbf{x}}_k - \mathbf{x}_k\|_{\mathcal{X}}^2$
    \IF{input AE is used}
      \STATE $\mathbf{u}'_k \gets \mathcal{E}'_{\theta'}(\mathbf{u}_k)$
      \STATE $\hat{\mathbf{u}}_k \gets \mathcal{D}'_{\phi'}(\mathbf{u}'_k)$
      \STATE Compute control AE loss $\mathcal{L}_{\text{C-AE}} = \|\hat{\mathbf{u}}_k - \mathbf{u}_k\|_2^2$
    \ENDIF
    \STATE Update AE parameters using gradient descent
  \ENDFOR
\ENDFOR
\vspace{0.5em}

\STATE \textbf{Stage 2: ROM Training}
\STATE Initialize $(\theta,\phi,(\theta',\phi'))$ from Stage 1; randomly initialize $\varphi_1,\varphi_2$
\STATE Prepare sequential training samples 
$\{ \mathbf{x}_{k-H{:}k},\;\mathbf{u}_{k-H{:}k} \} 
\rightarrow 
\{ \mathbf{x}_{k+1{:}k+M} \}$.
\FOR{epoch $=1,\cdots,E_2$}
  \FOR{$k = H,\cdots,N-M$}
    \FOR{$i = 0,\cdots,H$}
      \STATE $\mathbf{z}_{k-i} \gets \mathcal{E}_\theta(\mathbf{x}_{k-i})$
    \ENDFOR
    \IF{input AE is used}
      \FOR{$i = 0,\cdots,H$}
        \STATE $\mathbf{u}'_{k-i} \gets \mathcal{E}'_{\theta'}(\mathbf{u}_{k-i})$
      \ENDFOR
      \STATE Construct $\boldsymbol{\xi}_{0|k}$ using $\{\mathbf{z}_{k-i},\mathbf{u}'_{k-i}\}$
    \ELSE
      \STATE Construct $\boldsymbol{\xi}_{0|k}$ using $\{\mathbf{z}_{k-i},\mathbf{u}_{k-i}\}$
    \ENDIF
    \STATE $\hat{\mathbf{z}}_{0|k} \gets \mathcal{E}_\theta(\mathbf{x}_k)$
    \FOR{$l = 0,\cdots,M-1$}
      \IF{input AE is used}
        \STATE $\mathbf{u}'_{k+l} \gets \mathcal{E}'_{\theta'}(\mathbf{u}_{k+l})$
        \STATE $\hat{\mathbf{z}}_{l+1|k} \gets 
        \widetilde{\mathbf{a}}_{\varphi_1}(\boldsymbol{\xi}_{l|k}) 
        + 
        \widetilde{\mathbf{B}}_{\varphi_2}(\boldsymbol{\xi}_{l|k}) \cdot \mathbf{u}'_{k+l}$
      \ELSE
        \STATE $\hat{\mathbf{z}}_{l+1|k} \gets 
        \widetilde{\mathbf{a}}_{\varphi_1}(\boldsymbol{\xi}_{l|k}) 
        + 
        \widetilde{\mathbf{B}}_{\varphi_2}(\boldsymbol{\xi}_{l|k}) \cdot \mathbf{u}_{k+l}$
      \ENDIF
      \STATE Update $\boldsymbol{\xi}_{l+1|k}$ with new latent state $\hat{\mathbf{z}}_{l+1|k}$
    \ENDFOR
    \STATE Compute total loss $\mathcal{L}$ as in \eqref{eq:mult-obj-loss}
    \STATE Update parameters $\theta,\phi,\varphi_1,\varphi_2,( \theta',\phi')$ using gradient descent
  \ENDFOR
\ENDFOR
\end{algorithmic}
\end{algorithm}

\begin{figure}[t]
    \centering
    \includegraphics[]{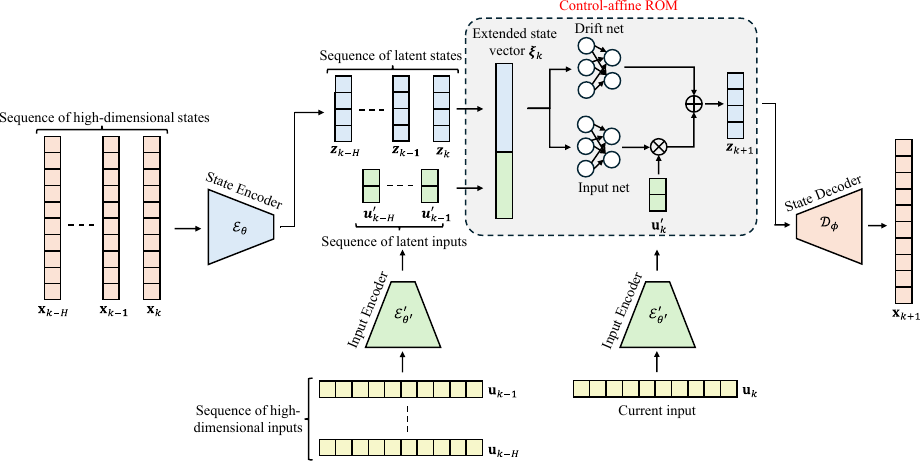}
    \caption{Single-step inference of the control-affine ROM. The encoders compress the high-dimensional state and input sequences into low-dimensional latent ones, which are combined to form the extended state vector $\boldsymbol{\xi}_k$ as in \eqref{eq:extended-state}. The future latent dynamics is predicted using a control-affine structure in terms of the current latent input $\mathbf{u}^\prime_k$. The state decoder maps the predicted latent vector to the corresponding high-dimensional representation.}
    \label{fig:framework}
\end{figure}

\section{Feedback linearization of Control-affine systems}
\label{sec:FL}
One of the main advantages of constraining the dynamics to a control-affine structure is the ability to apply feedback linearization for system control.
The main idea of feedback linearization is to algebraically transform the nonlinear dynamics into an equivalently linear representation \citep{isidori1995nonlinear}.
This allows the use of well-established linear methods to perform stabilization or tracking.

Consider the control-affine discrete system given in \eqref{eq:latent-affine}, the goal of feedback linearization is to obtain a model in the form 
\begin{equation}
    \label{eq:feedback-linear-goal}
    \mathbf{z}_{k+1} = \bar{\mathbf{A}}\mathbf{z}_{k} + \bar{\mathbf{B}}\mathbf{v}_k,
\end{equation}
where $\bar{\mathbf{A}} \in \mathbb{R}^{r \times r}$ and $\bar{\mathbf{B}} \in \mathbb{R}^{r \times m}$ denote the desired system and input matrices defining the behavior of the linearized system, and $\mathbf{v}_k\in \mathbb{R}^m$ is the new virtual input vector.
Within the ROM framework presented in this paper, such linearization is attainable under the condition stated in Proposition~\ref{prop:cond-FL}.


\begin{proposition}
\label{prop:cond-FL}
A full-state feedback linearization of the control-affine latent dynamics given in \eqref{eq:latent-affine} exists if $\mathbf{B}_{\varphi_2}(\mathbf{z}_k)$ has full row rank, i.e., $\text{rank}(\mathbf{B}_{\varphi_2}(\mathbf{z}_k))=r$ for all $\mathbf{z}_k \in \mathcal{D} \subset \mathbb{R}^r$, where $\mathcal{D}$ denotes the domain of interest.
Under this condition, the linearizing feedback law is given by
\begin{equation}
    \label{eq:linearizing-law}
    \mathbf{u}_k = \mathbf{B}^{R}_{\varphi_2}(\mathbf{z}_k)(\bar{\mathbf{A}}\mathbf{z}_{k} + \bar{\mathbf{B}}\mathbf{v}_k - \mathbf{a}_{\varphi_1}(\mathbf{z}_k)),
\end{equation}
where $\mathbf{B}^{R}_{\varphi_2}$ denote the right pseudoinverse of $\mathbf{B}_{\varphi_2}$.
\end{proposition}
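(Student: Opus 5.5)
The proof will be by direct substitution. The only nontrivial ingredient is that a full-row-rank matrix has a right inverse.

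\textbf{Step 1: existence of the right pseudoinverse.} Fix $\mathbf{z}_k \in \mathcal{D}$. Since $\mathbf{B}_{\varphi_2}(\mathbf{z}_k)\in\mathbb{R}^{r\times m}$ has rank $r$, we must have $m\ge r$. The Gram matrix $\mathbf{B}_{\varphi_2}(\mathbf{z}_k)\mathbf{B}_{\varphi_2}(\mathbf{z}_k)^T\in\mathbb{R}^{r\times r}$ is then invertible, because its null space coincides with that of $\mathbf{B}_{\varphi_2}(\mathbf{z}_k)^T$, which is trivial. I would therefore take
\[
\mathbf{B}^{R}_{\varphi_2}(\mathbf{z}_k)=\mathbf{B}_{\varphi_2}(\mathbf{z}_k)^T\bigl(\mathbf{B}_{\varphi_2}(\mathbf{z}_k)\mathbf{B}_{\varphi_2}(\mathbf{z}_k)^T\bigr)^{-1},
\]
which satisfies $\mathbf{B}_{\varphi_2}(\mathbf{z}_k)\mathbf{B}^{R}_{\varphi_2}(\mathbf{z}_k)=\mathbf{I}_{r\times r}$. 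This matrix is the Moore--Penrose inverse in the full-row-rank case. The same argument shows it is well defined, and continuous or smooth in $\mathbf{z}_k$ whenever $\mathbf{B}_{\varphi_2}$ is, at every point of $\mathcal{D}$.

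\textbf{Step 2: substitution.} I would insert the feedback law \eqref{eq:linearizing-law} into \eqref{eq:latent-affine}:
\[
\mathbf{z}_{k+1}=\mathbf{a}_{\varphi_1}(\mathbf{z}_k)+\mathbf{B}_{\varphi_2}(\mathbf{z}_k)\mathbf{B}^{R}_{\varphi_2}(\mathbf{z}_k)\bigl(\bar{\mathbf{A}}\mathbf{z}_k+\bar{\mathbf{B}}\mathbf{v}_k-\mathbf{a}_{\varphi_1}(\mathbf{z}_k)\bigr).
\]
Using Step 1, the product $\mathbf{B}_{\varphi_2}\mathbf{B}^{R}_{\varphi_2}$ reduces to the identity, the drift terms $\mathbf{a}_{\varphi_1}(\mathbf{z}_k)$ cancel, and what remains is exactly \eqref{eq:feedback-linear-goal}. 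This holds for arbitrary target matrices $\bar{\mathbf{A}},\bar{\mathbf{B}}$ and any virtual input $\mathbf{v}_k$. For this reason I would call it \emph{full-state} linearization: every latent coordinate is directly assignable, with no coordinate change and no zero dynamics.

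\textbf{Main obstacle: the domain.} The argument only applies as long as the trajectory stays in $\mathcal{D}$, where the rank condition holds. I would therefore state the conclusion as valid while $\mathbf{z}_k\in\mathcal{D}$, i.e.\ for trajectories of the closed loop that remain in $\mathcal{D}$, rather than globally.

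I would also remark that the condition is sufficient but not necessary, which matches the ``if'' in the statement. When $m>r$ the right inverse is not unique, and any right inverse works; the pseudoinverse is singled out as the minimum-norm choice of $\mathbf{u}_k$.
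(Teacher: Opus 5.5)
Your proposal is correct and matches the paper's proof: substitute the feedback law \eqref{eq:linearizing-law} into \eqref{eq:latent-affine} and use $\mathbf{B}_{\varphi_2}(\mathbf{z}_k)\mathbf{B}^{R}_{\varphi_2}(\mathbf{z}_k)=\mathbf{I}_{r\times r}$, which exists exactly when $\mathbf{B}_{\varphi_2}(\mathbf{z}_k)$ has full row rank (forcing $r\le m$). Your Gram-matrix justification of $\mathbf{B}^{T}_{\varphi_2}(\mathbf{B}_{\varphi_2}\mathbf{B}^{T}_{\varphi_2})^{-1}$ makes explicit what the paper states right after the proof in \eqref{eq:r<m}, and your caveat that closed-loop trajectories must remain in $\mathcal{D}$ is a sensible refinement the paper leaves implicit.
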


\begin{proof}
Substituting \eqref{eq:linearizing-law} into the original system \eqref{eq:latent-affine} yields:
\begin{equation*}
\label{eq:proof-FL}
    \mathbf{z}_{k+1}=\mathbf{a}_{\varphi_1}(\mathbf{z}_k)+\mathbf{B}_{\varphi_2}(\mathbf{z}_k)\mathbf{B}^{R}_{\varphi_2}(\mathbf{z}_k)(\bar{\mathbf{A}}\mathbf{z}_{k} + \bar{\mathbf{B}}\mathbf{v}_k - \mathbf{a}_{\varphi_1}(\mathbf{z}_k)) = \bar{\mathbf{A}}\mathbf{z}_{k} + \bar{\mathbf{B}}\mathbf{v}_k,
\end{equation*}
by construction of $\mathbf{u}_k$.
For a matrix $\mathbf{B}^{R}_{\varphi_2}(\mathbf{z}_k)$ to exist such that $\mathbf{B}_{\varphi_2}(\mathbf{z}_k)\mathbf{B}^{R}_{\varphi_2}(\mathbf{z}_k)=\mathbf{I}_{r \times r}$, the column space of $\mathbf{B}_{\varphi_2}(\mathbf{z}_k)$ must span $\mathbb{R}^r$, or equivalently $\text{rank}(\mathbf{B}_{\varphi_2}(\mathbf{z}_k))=r$ for all $\mathbf{z}_k \in \mathcal{D}$.
The condition $\text{rank}(\mathbf{B}_{\varphi_2}(\mathbf{z}_k))=r$ implies $r \le m$ for $\mathbf{B}_{\varphi_2}(\mathbf{z}_k) \in \mathbb{R}^{r \times m}$.
\end{proof}
For the special case $r=m$, $\mathbf{B}^{R}_{\varphi_2}(\mathbf{z}_k)$ reduces to the standard matrix inverse $\mathbf{B}^{-1}_{\varphi_2}(\mathbf{z}_k)$.
When $r<m$, the right pseudoinverse takes the form
\begin{equation}
    \label{eq:r<m}
    \mathbf{B}^{R}_{\varphi_2}(\mathbf{z}_k)=\mathbf{B}^{T}_{\varphi_2}(\mathbf{z}_k)(\mathbf{B}_{\varphi_2}(\mathbf{z}_k)\mathbf{B}^{T}_{\varphi_2}(\mathbf{z}_k))^{-1},
\end{equation}
which corresponds to the Moore-Penrose pseudoinverse.
In this scenario, the feedback linearization law is not unique and \eqref{eq:r<m} yields the minimal norm solution.

Since $r$ is a tunable hyperparameter of our ROM, setting $r\le m$ enables the possibility of full-state feedback linearization.
However, one has to ensure/verify that $\mathbf{B}_{\varphi_2}(\mathbf{z}_k)$ has full row rank within a region of interest.
We note that it is possible to encourage the invertibility of $\mathbf{B}_{\varphi_2}$ during training of the ROM by proper design of the input net, e.g., output a lower-triangular matrix with strictly positive diagonal entries, or adding an additional term to the loss function that penalizes small singular values.
In practice, setting $r \le m$ is often not viable, as it might reduce the ROM accuracy, especially when $m$ is small.
In such cases, full-state feedback linearization with \eqref{eq:linearizing-law} is not possible, as $\text{rank}(\mathbf{B}_{\varphi_2}(\mathbf{z}_k)) \le m <r$, and only partial-feedback linearization is possible, limiting the control to an $m-$dimensional subspace.

The ROM presented in this work employs a sequence-based control-affine latent dynamics as presented in Sec.~\ref{sec:seq-based-affine}.
Therefore, the feedback linearization should be applied to the control-affine latent model in \eqref{eq:affine-extended} defined with the extended state vector $\boldsymbol{\xi}_k \in \mathbb{R}^d$.
Unlike the standard model in \eqref{eq:latent-affine}, full-state feedback linearization is structurally impossible for the extended model.
This limitation arises from the definition of the input matrix $\boldsymbol{\beta}(\boldsymbol{\xi}_k)$ in \eqref{eq:beta}, which contains an identity block $\mathbf{I}_{m \times m}$ that shifts the current input into the history buffer.
Any nonlinear feedback law applied to $\mathbf{u}_k$ will unavoidably inject nonlinear combinations of $\boldsymbol{\xi}_k$ into the input history sequence of the extended state, preventing the entire vector $\boldsymbol{\xi}_{k+1}$ from evolving linearly.
However, we can achieve exact input/output feedback linearization by treating the current latent state as system output, i.e., $\mathbf{y}_k :=\mathbf{z}_k$.
We formalize this in the following proposition.
\begin{proposition}
    \label{prop:cond-FL-extended}
    Consider the sequence-based control-affine model \eqref{eq:affine-extended} with $\boldsymbol{\alpha}$ and $\boldsymbol{\beta}$ defined in \eqref{eq:alpha} and \eqref{eq:beta}, respectively, and let $\mathbf{y}_k=\mathbf{C}\boldsymbol{\xi}_k=\mathbf{z}_k$ define the output equation, where $\mathbf{C}\in \mathbb{R}^{r \times d}$ extracts the last $r$ components of $\boldsymbol{\xi}_k$.
    Exact input-output feedback linearization from a virtual input $\mathbf{v}_k \in \mathbb{R}^m$ to the subsequent output $\mathbf{y}_{k+1} \in \mathbb{R}^r$ is achievable if $\text{rank}(\widetilde{\mathbf{B}}_{\varphi_2}(\boldsymbol{\xi}_k))=r$ for all $\boldsymbol{\xi}_k \in \mathcal{D} \subset \mathbb{R}^d$, where $\mathcal{D}$ denotes the domain of interest.
    Under this condition, the linearizing feedback law is given by
    \begin{equation}
        \label{eq:linearizing-law-extended}
        \mathbf{u}_k = \widetilde{\mathbf{B}}^{R}_{\varphi_2}(\boldsymbol{\xi}_k) \left( \bar{\mathbf{A}}\mathbf{y}_{k} + \bar{\mathbf{B}}\mathbf{v}_k - \tilde{\mathbf{a}}_{\varphi_1}(\boldsymbol{\xi}_k) \right),
    \end{equation}
    where $\widetilde{\mathbf{B}}^{R}_{\varphi_2}$ denotes the right pseudoinverse of $\widetilde{\mathbf{B}}_{\varphi_2}$, and $\bar{\mathbf{A}}\in \mathbb{R}^{r \times r}$, $\bar{\mathbf{B}} \in \mathbb{R}^{r \times m}$ define the desired latent output dynamics:
    \[
    \mathbf{y}_{k+1} = \bar{\mathbf{A}}\mathbf{y}_k + \bar{\mathbf{B}}\mathbf{v}_k.
    \]
\end{proposition}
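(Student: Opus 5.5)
The plan is to reduce the statement to Proposition~\ref{prop:cond-FL} by isolating the output channel of the extended dynamics. First, I will compute $\mathbf{y}_{k+1}=\mathbf{C}\boldsymbol{\xi}_{k+1}$ directly from \eqref{eq:affine-extended}. Because $\mathbf{C}$ extracts the last $r$ components, only the last block row of $\boldsymbol{\alpha}$ and $\boldsymbol{\beta}$ matters. By the block structure of $\mathbf{S}$ established in the proof of the previous proposition, the last block row of $\mathbf{S}$ is zero, so $\mathbf{C}\mathbf{S}\boldsymbol{\xi}_k=\mathbf{0}_r$. Hence $\mathbf{C}\boldsymbol{\alpha}(\boldsymbol{\xi}_k)=\widetilde{\mathbf{a}}_{\varphi_1}(\boldsymbol{\xi}_k)$ and $\mathbf{C}\boldsymbol{\beta}(\boldsymbol{\xi}_k)=\widetilde{\mathbf{B}}_{\varphi_2}(\boldsymbol{\xi}_k)$. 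This gives
\[
\mathbf{y}_{k+1}=\widetilde{\mathbf{a}}_{\varphi_1}(\boldsymbol{\xi}_k)+\widetilde{\mathbf{B}}_{\varphi_2}(\boldsymbol{\xi}_k)\mathbf{u}_k ,
\]
so the output has relative degree one with decoupling matrix $\widetilde{\mathbf{B}}_{\varphi_2}(\boldsymbol{\xi}_k)\in\mathbb{R}^{r\times m}$.

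Second, I will substitute the law \eqref{eq:linearizing-law-extended}. Under the rank assumption $\text{rank}(\widetilde{\mathbf{B}}_{\varphi_2}(\boldsymbol{\xi}_k))=r$ on $\mathcal{D}$, the matrix $\widetilde{\mathbf{B}}_{\varphi_2}(\boldsymbol{\xi}_k)\widetilde{\mathbf{B}}_{\varphi_2}^T(\boldsymbol{\xi}_k)$ is invertible. The right pseudoinverse $\widetilde{\mathbf{B}}^{R}_{\varphi_2}=\widetilde{\mathbf{B}}^T_{\varphi_2}(\widetilde{\mathbf{B}}_{\varphi_2}\widetilde{\mathbf{B}}^T_{\varphi_2})^{-1}$ therefore exists and satisfies $\widetilde{\mathbf{B}}_{\varphi_2}\widetilde{\mathbf{B}}^{R}_{\varphi_2}=\mathbf{I}_{r\times r}$, exactly as in \eqref{eq:r<m}, with $r\le m$ as a necessary consequence. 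The drift term then cancels and $\mathbf{y}_{k+1}=\bar{\mathbf{A}}\mathbf{y}_k+\bar{\mathbf{B}}\mathbf{v}_k$, where $\mathbf{y}_k=\mathbf{C}\boldsymbol{\xi}_k=\mathbf{z}_k$ is available from the state. This mirrors the computation in the proof of Proposition~\ref{prop:cond-FL}.

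Finally, I will remark that the remaining components of $\boldsymbol{\xi}_{k+1}$ (the shifted history, including the inserted $\mathbf{u}_k$) evolve nonlinearly. This is consistent with the preceding discussion: the claim concerns only the input--output map, and those components constitute internal dynamics that the statement does not constrain. The main point requiring care is verifying $\mathbf{C}\mathbf{S}=\mathbf{0}$, i.e.\ that the last block row of $\mathbf{S}$ contributes nothing, so that no history term leaks into $\mathbf{y}_{k+1}$. Since the previous proof gave $\mathbf{S}$ explicitly, this reduces to reading off its zero last block row; the rest is routine substitution.
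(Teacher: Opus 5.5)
Your proposal is correct and follows the paper's proof. Like the paper, you read off $\mathbf{y}_{k+1}=\widetilde{\mathbf{a}}_{\varphi_1}(\boldsymbol{\xi}_k)+\widetilde{\mathbf{B}}_{\varphi_2}(\boldsymbol{\xi}_k)\mathbf{u}_k$ from the last block row, substitute the feedback law, and use the rank condition to ensure $\widetilde{\mathbf{B}}_{\varphi_2}\widetilde{\mathbf{B}}^{R}_{\varphi_2}=\mathbf{I}_{r\times r}$ (hence $r\le m$); your check that $\mathbf{C}\mathbf{S}=\mathbf{0}$ and the explicit formula $\widetilde{\mathbf{B}}^{R}_{\varphi_2}=\widetilde{\mathbf{B}}^T_{\varphi_2}(\widetilde{\mathbf{B}}_{\varphi_2}\widetilde{\mathbf{B}}^T_{\varphi_2})^{-1}$ only spell out steps the paper states directly.
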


\begin{proof}
From the definition of the extended dynamics in \eqref{eq:affine-extended}--\eqref{eq:beta}, and the definition of the output at the next time step, i.e., $\mathbf{y}_{k+1}=\mathbf{C}\boldsymbol{\xi}_{k+1}$, the last \(r\) components of \(\boldsymbol{\xi}_{k+1}\) satisfy
\begin{equation}
    \label{eq:proof-in/out linearization}
    \mathbf{y}_{k+1} = \mathbf{z}_{k+1} = \widetilde{\mathbf{a}}_{\varphi_1}(\boldsymbol{\xi}_k) + \widetilde{\mathbf{B}}_{\varphi_2}(\boldsymbol{\xi}_k)\mathbf{u}_k.
\end{equation}
Substituting \eqref{eq:linearizing-law-extended} in \eqref{eq:proof-in/out linearization} yields:
\[ 
\mathbf{y}_{k+1} = \widetilde{\mathbf{a}}_{\varphi_1}(\boldsymbol{\xi}_k) + \widetilde{\mathbf{B}}_{\varphi_2}(\boldsymbol{\xi}_k)\widetilde{\mathbf{B}}^{R}_{\varphi_2}(\boldsymbol{\xi}_k) \left( \bar{\mathbf{A}}\mathbf{y}_{k} + \bar{\mathbf{B}}\mathbf{v}_k - \tilde{\mathbf{a}}_{\varphi_1}(\boldsymbol{\xi}_k) \right) = \bar{\mathbf{A}}\mathbf{y}_{k} + \bar{\mathbf{B}}\mathbf{v}_k, 
\]
by construction of $\mathbf{u}_k$. 
For a matrix $\widetilde{\mathbf{B}}^{R}_{\varphi_2}(\boldsymbol{\xi}_k)$ to exist such that $\widetilde{\mathbf{B}}_{\varphi_2}(\boldsymbol{\xi}_k)\widetilde{\mathbf{B}}^{R}_{\varphi_2}(\boldsymbol{\xi}_k)=\mathbf{I}_{r \times r}$, the column space of $\widetilde{\mathbf{B}}_{\varphi_2}(\boldsymbol{\xi}_k)$ must span $\mathbb{R}^r$, or equivalently $\text{rank}(\widetilde{\mathbf{B}}_{\varphi_2}(\boldsymbol{\xi}_k))=r$ for all $\boldsymbol{\xi}_k \in \mathcal{D}$.
The condition $\text{rank}(\widetilde{\mathbf{B}}_{\varphi_2}(\boldsymbol{\xi}_k))=r$ implies $r \le m$ for $\widetilde{\mathbf{B}}_{\varphi_2}(\boldsymbol{\xi}_k) \in \mathbb{R}^{r \times m}$.

\end{proof}

When an input AE is employed, propositions~\ref{prop:cond-FL} and ~\ref{prop:cond-FL-extended} remain valid, with the input dimension $m$ replaced by the latent input dimension $m^\prime$.
Once linearized, the model can be controlled using standard linear control methods, e.g., pole placement, PID, or LQR. 
Figure \ref{fig:Blockdiagram} shows a block diagram of this process, illustrating the input-output linearization of the control-affine ROM coupled with a PID controller for the linearized system.
\begin{figure}[t]
    \centering
    \includegraphics{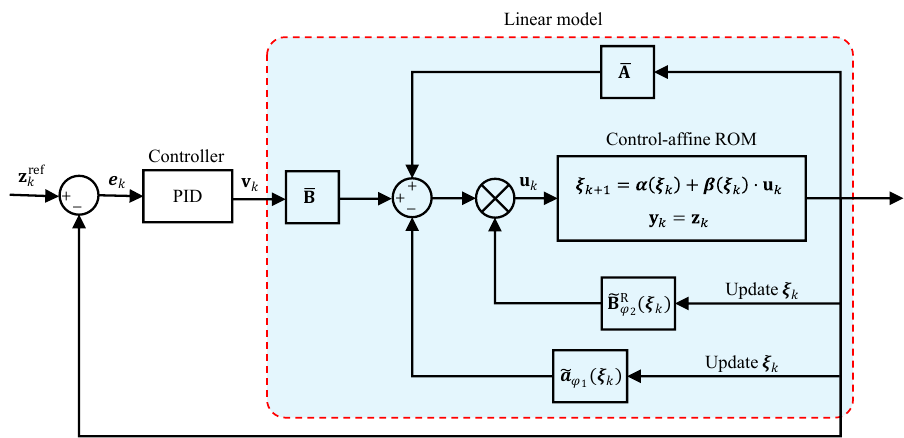}
    \caption{Input/Output feedback linearization block diagram. The linearized model is controlled using a PID controller to track the reference $\mathbf{z}^{\mathrm{ref}}_k$.}
    \label{fig:Blockdiagram}
\end{figure}

\section{Numerical examples}
\label{sec:examples}
\subsection{One-dimensional heat equation}
\label{sec:heat}
We consider first the one-dimensional heat equation as a demonstrative example for the proposed method
\begin{equation}
    \label{eq:1d-heat}
    \frac{\partial T(x,t)}{\partial t} = \alpha \frac{\partial^2 T(x,t)}{\partial x^2}+u(x,t), \quad \forall t \in [t_0,t_f], x\in \Omega,
\end{equation}
which describes the temperature evolution of the temperature field $T(x,t)$ over a one-dimensional domain $\Omega$.
Physically, this setup can be interpreted as a thin steel beam heated by a distributed heat source along its length (e.g., an electric resistance wire that runs through the beam), where $\alpha$ denotes the thermal diffusivity and $u(x,t)$ represents the heat source.
The equation is solved numerically on a uniform spatial grid $\Omega=[0,1]$ with $t_0=0$ and $t_f=0.5$ using an explicit finite-difference forward-time central-space scheme, where the time step $\Delta t$ is chosen according to the classical stability condition $\gamma=\alpha \Delta t/\Delta x^2 \le 0.5$ \citep{Press2007FTCS}.
Dirichlet boundary conditions are imposed at both ends of the domain, i.e., $T(0,t)=T(1,t)=0$.
The initial temperature distribution is defined as $T_0(x) = \sin{\pi x}$.

\subsubsection{Data-driven control-affine ROM}
A large number of simulations is generated to construct a training database for the control-affine model.
Every simulation is parameterized by a unique spatio-temporal input source term, which is modeled as a random superposition of $I$ temporally activated Gaussian distributions, i.e.,
\begin{equation}
\label{eq:input-heat}
\begin{aligned}
    &u(x,t) = \sum_{i=1}^I u_i(x,t), \\
    &u_i(x,t)=\begin{cases}
         A_i \exp\!\left(-\frac{(x-c_i)^2}{\sigma_i^2}\right), &\text{if } t\in [t_i^s, t_i^e],\\ 
         0, & \mathrm{otherwise}.
    \end{cases}
\end{aligned}
\end{equation}
The number of sources $I$ is randomly chosen between $1$ and $3$ per simulation.
The parameters of each Gaussian component $i$ are drown independently from uniform distributions: amplitude $A_i \sim \mathcal{U}(5.0, 20.0)$, spatial center $c_i \sim \mathcal{U}(0.2, 0.8)$, standard deviation $\sigma_i \sim \mathcal{U}(0.01, 0.1)$, activation start $t_i^s \sim \mathcal{U}(0.0, 0.3)$, and activation end $t_i^e \sim \mathcal{U}(0.3, t_f)$.
The resulting source term $u(x,t)$ is then normalized to $[0,1]$ using Min-Max scaling.

The temperature and input profiles over the discretized domain $\Omega$ define the high-dimensional state-input trajectories that are used to build the control-affine ROM.
In total, 1000 simulations are used for training, 400 for validation and hyperparameter tuning, and 600 for testing.
The control-affine ROM framework visualized in Fig.~\ref{fig:framework} is formulated such that $\mathbf{x}_k \in \mathbb{R}^n$ denotes the high-dimensional temperature profile over $\Omega$ for time step $t_k$, and $\mathbf{u}_k \in \mathbb{R}^m$ is the corresponding high-dimensional input vector obtained by discretizing \eqref{eq:input-heat} on the same spatial grid.
Here, $n=m$ denotes the number of grid points resulting from the uniform discretization of $\Omega$.
Since both the state and the input are high-dimensional fields, an autoencoder is employed for each of them.
Accordingly, both state and input encoders are designed to map the respective high-dimensional vectors into latent representation.
In order to enable feedback linearization (see Sec.~\ref{sec:FL}), the control-affine ROM should be designed such that the latent dimensions satisfies $r\le m^\prime$, where $r$ denotes the state latent dimension and $m^\prime$ is the input latent dimension, which is a necessary condition for the input matrix to attain full row rank (cf. Proposition~\ref{prop:cond-FL-extended}).
In this example, we set $r=m^\prime=6$, yielding a square input matrix.
This allows the full-rank condition to be interpreted as invertibility.
The architectures of the state and input autoencoders are shown in Tab.~\ref{tab:encoder-heat}.
Both share the same structure except for the output layers, where the input encoder uses a sigmoid activation.
In both cases, the high-dimensional state and input fields are mapped to their latent representations through a sequence of fully connected layers with ReLU activations, in which the number of neurons is progressively reduced down to $r$ units for the state encoder and $m^\prime$ units for the input encoder at the output.
The decoders adopt symmetric architectures to reconstruct the high-dimensional vectors, with a sigmoid activation in the output layer of the input decoder.
This design choice is consistent with the scaling of the input $u(x,t)$ to $[0,1]$
and facilitates stable control by ensuring that the reconstructed control signals remain within the range observed during training.
In addition, the drift and input networks each consist of 2 fully connected layers with 128 neurons and ReLU activation, followed by a linear output layer.
Specifically, the output layer of the drift network $\widetilde{\mathbf{a}}_{\varphi_1}$ has dimension $r$, while the input network $\widetilde{\mathbf{B}}_{\varphi_2}$ outputs a vector of dimension $r\cdot m^\prime$, which is reshaped into a matrix of size $r \times m^\prime$.
The reported hyperparameters were tuned manually to obtain the best predictive performance on the validation data.
\begin{table}[t]
\centering
\caption{Architecture of the state and input encoders for the 1D heat equation example.}
\label{tab:encoder-heat}
\begin{tabular}{l|l}
\hline
State encoder & Input encoder \\
\hline
input: $101$ & input: $101$ \\
Dense(64), ReLU & Dense(64), ReLU \\
Dense(32), ReLU & Dense(32), ReLU \\
Dense($r$), linear & Dense($m^\prime$), Sigmoid \\
\hline
\end{tabular}
\end{table}

The control-affine ROM is trained following Algorithm~\ref{alg:rom-training} using the ADAM optimizer \citep{Kingma2014} with initial learning rate $10^{-3}$, which is reduced by a factor of two if the validation loss does not improve for 25 consecutive epochs.
First, the autoencoders are pre-trained for 10 epochs to obtain stable latent representations.
Subsequently, all model components are trained jointly for 500 epochs using sequences of length $H=9$ and a prediction horizon of length $M=5$.
The weighting coefficients of the multi-objective loss function in \eqref{eq:mult-obj-loss} are set to $\gamma_1=\gamma_2=\gamma_4=1$ and $\gamma_3=0.3$.
\begin{figure}[t]
    \centering
    \includegraphics[width=\textwidth]{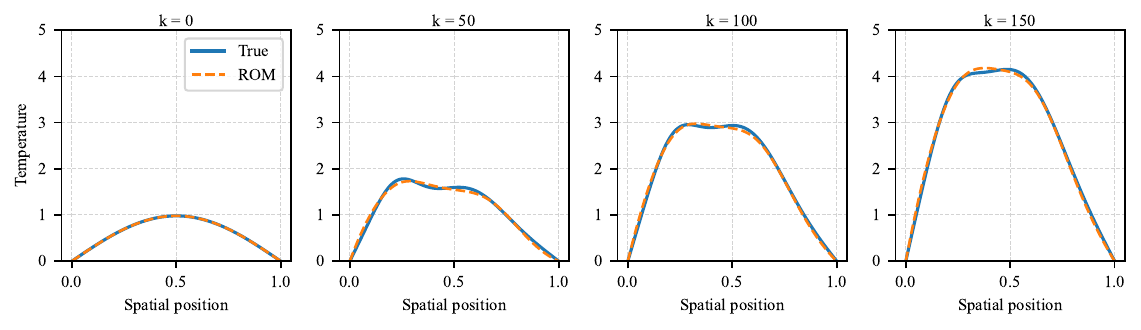}
    \caption{Comparison of ground-truth temperature profiles and ROM-predicted outputs at four representative time steps for a test simulation of the heat equation.}
    \label{fig:heat_recons}
\end{figure}
The model is evaluated recursively using a randomly chosen input trajectory obtained from the test dataset.
The predicted temperature profiles at four representative time steps are shown in Fig.~\ref{fig:heat_recons}.
The control-affine ROM exhibits excellent agreement with the ground-truth solution across all time steps.
Quantitatively, this is reflected by a root-mean-square error (RMSE) of $2.67 \times 10^{-3}$, computed over all spatial points and time steps in the simulation horizon.
This low error indicates that the model accurately captures both the spatial structure and temporal evolution of the temperature field.
\begin{figure}[t]
    \centering
    \includegraphics[width=\textwidth]{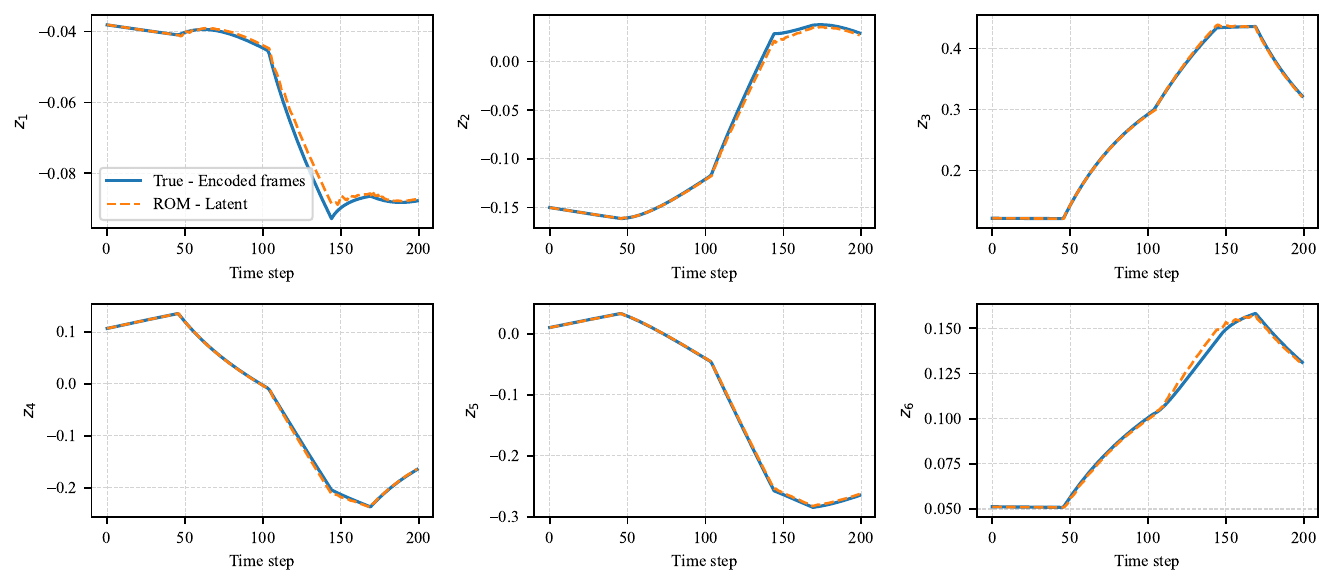}
    \caption{Comparison of the latent dynamics predicted by the control-affine model and the corresponding ground-truth latent trajectories obtained via state encoding for a test simulation of the heat equation. Each subplot shows one component $\mathrm{z}_i$ of the latent state vector $\mathbf{z} \in \mathbb{R}^r$, with reduced dimension $r=m^\prime=6$.}
    \label{fig:eval_latent_heat}
\end{figure}

The latent-space dynamics are evaluated in Fig.~\ref{fig:eval_latent_heat}, where the predictions are obtained using the control-affine model in~\eqref{eq:affine-extended}.
Specifically, initial sequences of high-dimensional temperature fields and input signals of length $H+1=10$ are randomly selected from the test dataset.
These sequences are encoded using the state and input AEs, yielding the latent representations $\mathbf{z}_k$ and $\mathbf{u}_k^\prime$, respectively.
Based on these encoded sequences, the initial extended state vector is constructed according to~\eqref{eq:extended-state} as 
\[
    \boldsymbol{\xi}_{10}=[\mathbf{z}_0^T,\ldots,\mathbf{z}_9^T,\mathbf{u}_{0}^{\prime^T},\ldots,\mathbf{u}_9^{\prime^T},\mathbf{z}_{10}^T]^T.
\]
This vector serves as the initial condition for the latent dynamical system and is provided as input to the drift $\widetilde{\mathbf{a}}_{\varphi_1}(\boldsymbol{\xi}_k)$ and input $\widetilde{\mathbf{B}}_{\varphi_2}(\boldsymbol{\xi}_k)$ networks.
The time evolution is then obtained recursively. At each time step, the next extended state $\boldsymbol{\xi}_{k+1}$ is computed using~\eqref{eq:affine-extended}–\eqref{eq:beta}, where the current encoded input $\mathbf{u}_{k}^{\prime}$ enters the model in an affine manner, consistent with the structure illustrated in Fig.~\ref{fig:framework}.
To assess the prediction accuracy, the predicted latent states are compared against reference latent trajectories obtained by encoding the corresponding ground-truth temperature fields with the state encoder.
A very good agreement is observed between the predicted and true latent vectors, with an average RMSE of $2.26 \times 10^{-3}$, computed across all latent dimensions and time steps.
It should be noted that Figs.~\ref{fig:heat_recons} and \ref{fig:eval_latent_heat} correspond to two different simulations parameterized by distinct input profiles, illustrating the model's ability to generalize across different scenarios.

\subsubsection{Comparison of control-affine and baseline ROMs}
The performance of the control-affine ROM is compared to a baseline ROM with linear latent dynamics.
To ensure a fair comparison, we retain the autoencoder architecture and training algorithm from Sec.~\ref{sec:control-affine}, while constraining the latent dynamics to evolve linearly.
The reason for the choice of this linear baseline model is its relevance to our motivation.
Our framework first identifies control-affine model, and then leverage feedback linearization to obtain a (partially) linear system.
In contrast, the linear baseline model attempts to directly identify a transformation that yields linear dynamics.
The comparison between the two models is presented in Tab.~\ref{tab:comp-heat} in terms of average and standard deviation RMSEs obtained from 100 test simulations of the heat equation.
The control-affine ROM outperforms the baseline ROM in both end-to-end (i.e., the error between the predicted and ground-truth high-dimensional temperature fields, as in Fig.~\ref{fig:heat_recons}) and latent predictions (i.e., the error between the predicted latent trajectories and the encoded high-dimensional sequences, as in Fig.~\ref{fig:eval_latent_heat}).
It achieves approximately a twofold reduction in reconstruction error, with an even more pronounced improvement in the latent dynamics accuracy.
In addition to lower mean RMSE values, the control-affine model also exhibits slightly smaller standard deviations, indicating more consistent performance across different simulations.
However, the overall performance of the linear baseline ROM remains strong in this example, which can be attributed to the intrinsic linearity of the heat equation, as its dynamics can be effectively captured by a linear model in the latent space.
\begin{table}[ht]
\centering
\caption{Average RMSEs and standard deviations computed over 100 simulations for the heat equation.}
\label{tab:comp-heat}
\begin{tabular}{lll}
\hline
 RMSE & Control-affine ROM & Baseline ROM \\
\hline
End-to-end  & $7.70 \times 10^{-3} \pm 5.9\times10^{-3}$ & $1.62 \times 10^{-2} \pm 6.5\times10^{-3}$ \\
Latent  & $3.24\times 10^{-3} \pm 2.5\times10^{-3}$ & $2.09 \times 10^{-2} \pm 6.8\times10^{-3}$ \\
\hline
\end{tabular}
\end{table}

\subsubsection{Control using feedback linearization}
\begin{figure}[t]
    \centering
    \includegraphics[width=\textwidth]{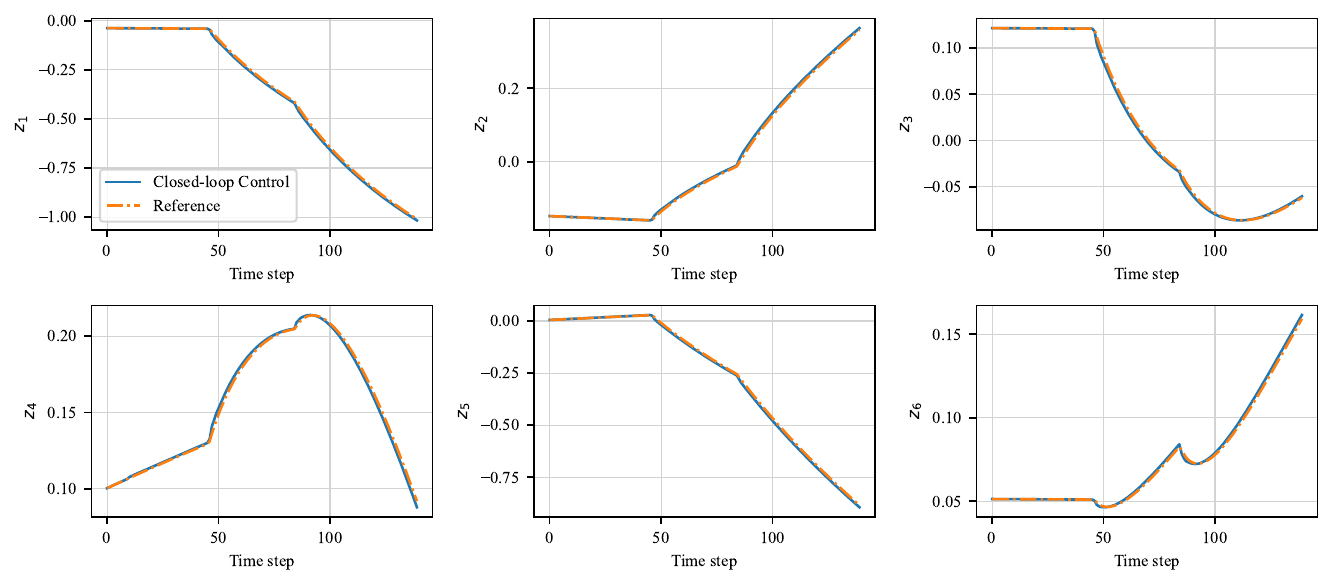}
    \caption{Latent space reference tracking using PID control in combination with feedback linearization. The reference trajectory is obtained by encoding a test simulation of the heat equation.}
    \label{fig:heat_control}
\end{figure}
\begin{figure}[t]
    \centering
    \includegraphics{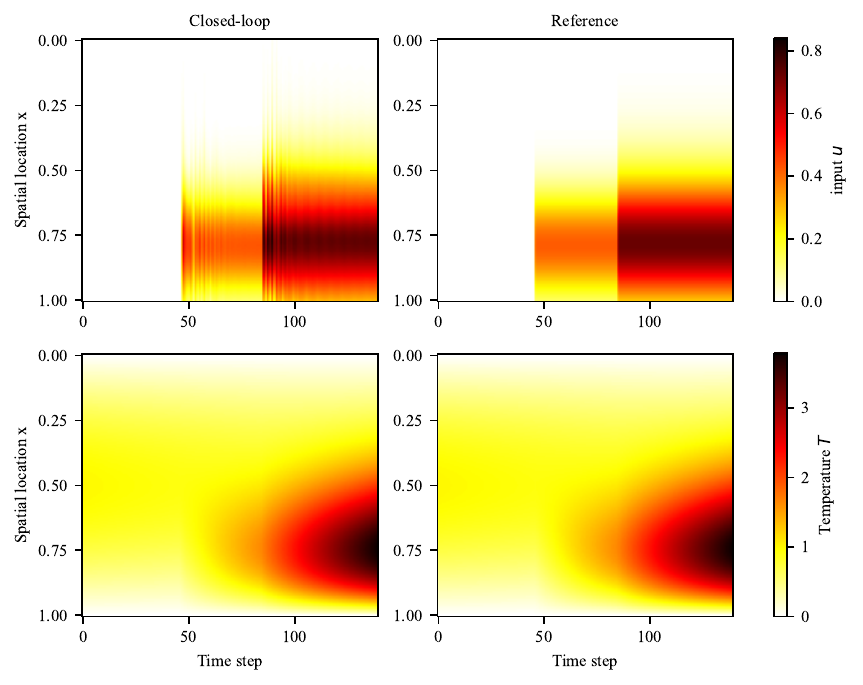}
    \caption{Comparison of decoded closed-loop control inputs and reference signals (top row), and corresponding reconstructed state trajectories versus reference simulation (bottom row).}
    \label{fig:heat_inputs}
\end{figure}
The control-affine structure of the ROM is leveraged to steer the temperature field toward a desired high-dimensional reference trajectory.
Specifically, feedback linearization is employed to compensate for the nonlinearity of the ROM, and the resulting linearized system is regulated using a PID controller, as illustrated in Fig.~\ref{fig:Blockdiagram}.
The control objective is defined within the latent space, where the reference latent vector $\mathbf{z}^{\text{ref}}_k$ is obtained by encoding the corresponding high-dimensional reference temperature profile $\mathbf{x}^{\mathrm{ref}}_k$, i.e., $\mathbf{z}^{\text{ref}}_k=\mathcal{E}_\theta(\mathbf{x}^{\mathrm{ref}}_k)$.

The closed-loop results are visualized in Fig.~\ref{fig:heat_control}, where the PID controller successfully tracks the desired latent reference trajectory.
At each time step, the PID controller produces a virtual control input $\mathbf{v}_k$, which is subsequently transformed into the latent input vector $\mathbf{u}^\prime_k$.
Notably, the matrix $\widetilde{\mathbf{B}}_{\varphi_2}(\boldsymbol{\xi}_k)$ remained full rank at all inversion steps during this experiment, despite no explicit rank-promoting term in the training loss.
The corresponding high-dimensional input profile is obtained by decoding $\mathbf{u}^\prime_k$ with the trained input decoder, i.e., $\mathbf{u}_k = \mathcal{D}^\prime_{\phi^\prime}(\mathbf{u}^\prime_k)$.
Figure \ref{fig:heat_inputs} illustrates the performance of the closed-loop controller by comparing the decoded input signal and the resulting temperature evolution with their respective references. The reference trajectory (bottom row) is designed to generate a localized hot spot near $x \approx 0.7$--$1.0$ that appears and disappears during two distinct time intervals. Physically, this corresponds to switching a localized heating element on and off to inject energy into the system. As shown in the top row, the decoded input closely matches the reference in both timing and spatial localization, as well as in magnitude, indicating that the controller successfully reproduces the actuation required to track the desired thermal state.

\subsection{Boxed ball}
\label{sec:ball}
The second example consists of a 2D simulated environment in which a  $64\time 64$ pixel camera observes a ball moving within a unit box with strong non-linear repulsive effects near the boundaries.
The system is actuated by applying external forces to the center of the ball in both $x$ and $y$ directions.
It should be noted that the same example has been treated by \cite{beintema2021non}, where a nonlinear state-space model has been identified from video data.
Here, the goal is to construct a data-driven control-affine ROM from a record of camera frames and input forces, and to design a controller that exploits the structure of the ROM using feedback linearization.
This setting reflects a common scenario in real-world applications, where models must be built solely from recorded simulations or experimental data.

We generate the training database by simulating trajectories for the movement of the ball, which is governed using the following nonlinear ODE:
\begin{subequations}\label{eq:dyn-ball}
\begin{align}
    \label{eq:dyn-ball-x}
    \Ddot{p}_x = \frac{1}{200}\left(\frac{1}{p_x^2}-\frac{1}{(1-p_x)^2}\right)-0.79\dot{p}_x + 0.25u_x, \\
    \label{eq:dyn-ball-y}
    \Ddot{p}_y = \frac{1}{200}\left(\frac{1}{p_y^2}-\frac{1}{(1-p_y)^2}\right)-0.79\dot{p}_y + 0.25u_y,
\end{align}
\end{subequations}
where $(p_x,p_y)$ denote the coordinates of the center of the ball and $(u_x,u_y)$ are the applied forces, constrained within the interval $[-1,1]$.
Each simulation camera frame is generated synthetically by calculating the pixel intensity $I(X,Y)$ at coordinates $(X,Y)$ on a $64\times 64$ cartesian grid according to:
\begin{equation}
    \label{eq:pix}
    I(X,Y) = \text{max}(0,1-\frac{(X-p_x)^2+(Y-p_y)^2}{0.25^2})+v,
\end{equation}
where $v$ is an additive noise term.
This expression produces a circular intensity profile centered at $(p_x,p_y)$, with a radius of approximately $0.25$.
The intensity decreases quadratically with the distance from the center and is truncated at zero to ensure non-negative pixel values.
As a result, the ball appears in the image as a smooth, bright disk whose position varies continuously according to \eqref{eq:dyn-ball}, while the noise term $v$ introduces mild measurement perturbations that mimic realistic sensing conditions.

\subsubsection{Data-driven control-affine ROM}
The training dataset is generated from a trajectory obtained by integrating the dynamics in \eqref{eq:dyn-ball} starting from an initial condition corresponding to $p_x(0)=p_y(0)=0.5$ with zero initial velocity.
The system is simulated for 5000 iterations with time step $\Delta t=0.3$ using the Runge-Kutta scheme.
We use a zero-order hold condition for the inputs, which are sampled independently at each step from a uniform distribution $u_x,u_y \sim \mathcal{U}(-1,1)$.
The high-dimensional frames are constructed according to \eqref{eq:pix}, with an additive Gaussian noise $v \sim \mathcal{N}(0,\sigma_y^2)$ applied independently to each pixel, where $\sigma_y=0.204$.
We emphasize that the inclusion of noise serves to improve robustness of the learned ROM and it reflects the inherent uncertainty in real-world sensing. 
Similarly, we generate 1000 noisy validation frames and 4000 test frames for evaluation.
We normalize the training, validation and testing frames between 0 and 1 to improve numerical stability and training efficiency of the ROM.

The control-affine ROM framework presented in Sec.~\ref{sec:control-affine} is formulated for general high-dimensional states $\mathbf{x}_k \in \mathcal{X}$.
In this example, the state corresponds to spatially structured data and is therefore represented as a two-dimensional field $\mathbf{x}_k \in \mathbb{R}^{N_X \times N_Y}$.
Consistent with the general formulation, such structured states can be interpreted either through their vectorized representation or, more naturally, by preserving their spatial organization.
In the latter case, the encoder and decoder mappings are parameterized using convolutional neural networks, which exploit the underlying spatial correlations.
The architecture of the encoder employed in this example is detailed in Tab.~\ref{tab:encoder-ball}.
It processes a single-channel $64 \times 64$ image through a sequence of 2D convolutional layers with ReLU activation functions, followed by a flattening operation and fully connected layers.
This design progressively extracts and compresses spatial features from the high-dimensional state $\mathbf{x}_k \in \mathbb{R}^{N_X \times N_Y}$ into a latent representation $\mathbf{z}_k = \mathcal{E}_\theta(\mathbf{x}_k) \in \mathbb{R}^r$.
The latent space is constrained between 0 and 1 using a final sigmoid activation.
We note the decoder adopts a symmetric architecture, as it performs the counterpart inverse operation.
In this example, the latent dimension is set to $r=m=2$, yielding to a square input matrix and enabling full-state latent feedback linearization in the latent space (cf. Proposition~\ref{prop:cond-FL-extended}).
All remaining hyperparemeters, including the number of layers, kernel size, activation functions, have been tuned manually to obtain the most accurate ROM evaluated on the validation dataset.
For the control-affine latent model, the architecture of the drift and input networks consist each of 3 fully connected hidden layers composed of 256 neurons with ReLU activations, followed with a linear output layer.
The drift network outputs $r=2$ neurons corresponding to the dimension of the latent vector, while the input network outputs $r \cdot m=4$ neurons, which are subsequently reshaped into an $r \times m=2 \times 2$ control input matrix.
It should be noted that this example does not employ an input autoencoder, as the input vector is already low-dimensional.
\begin{table}[ht]
\centering
\caption{Architecture of the encoder network for the boxed ball example. The input is a single-channel frame of size $64 \times 64$. All the convolutional layers have a kernel of size $3\times 3$ and a stride of 2.}
\label{tab:encoder-ball}
\begin{tabular}{ll}
\hline
Layer & Outputs \\
\hline
input & $1 \times 64 \times 64$ \\
Conv2D(3,4), ReLU & $4 \times 32 \times 32$ \\
Conv2D(3,8), ReLU & $8 \times 16 \times16$ \\
Conv2D(3,16), ReLU & $16 \times 8 \times 8$ \\
Conv2D(3,32), ReLU & $32 \times 4 \times 4$ \\
Flatten & 512 \\
Dense(128), ReLU & 128 \\
Dense($r$), Sigmoid & $r$ \\
\hline
\end{tabular}
\end{table}

The ROM is trained following Algorithm~\ref{alg:rom-training} using ADAM optimizer with initial learning rate $10^{-3}$, a batch size of 64, and a step learning rate decay.
In the first stage, we train the autoencoder alone for 20 epochs.
Subsequently, the full ROM is trained for 500 epochs using single-step rollouts, i.e., $M=1$, and history sequences of length $H=4$.
The coefficients of the multi-objective loss function in \eqref{eq:mult-obj-loss} are the same ones used in the previous example, i.e., $\gamma_1=\gamma_2=1$ and $\gamma_3=0.3$.
During training, model checkpoints are employed to retain the model parameters with the lowest validation loss.
For evaluation, we compare the dynamics predicted by the trained ROM against ground truth simulations, using randomly chosen sequences of initial conditions and input signal drawn from the test data.
The evaluation is performed in a recursive manner over 500 rollout steps.
The results depicted in Fig.~\ref{fig:eval_test} show very good agreement between the predicted and ground truth frames.
The error plots shown therein reveal that for most predictions, the position of the ball is correctly captured and the error primarily consists of eliminating noise.
The only exception is the prediction at time step $k=284$, where the predicted position of the ball is slightly shifted.
The overall RMSE, computed over all spatial points and time steps in the simulation horizon, is $3.32 \times 10^{-2}$.
\begin{figure}[t]
    \centering
    \includegraphics[width=\textwidth]{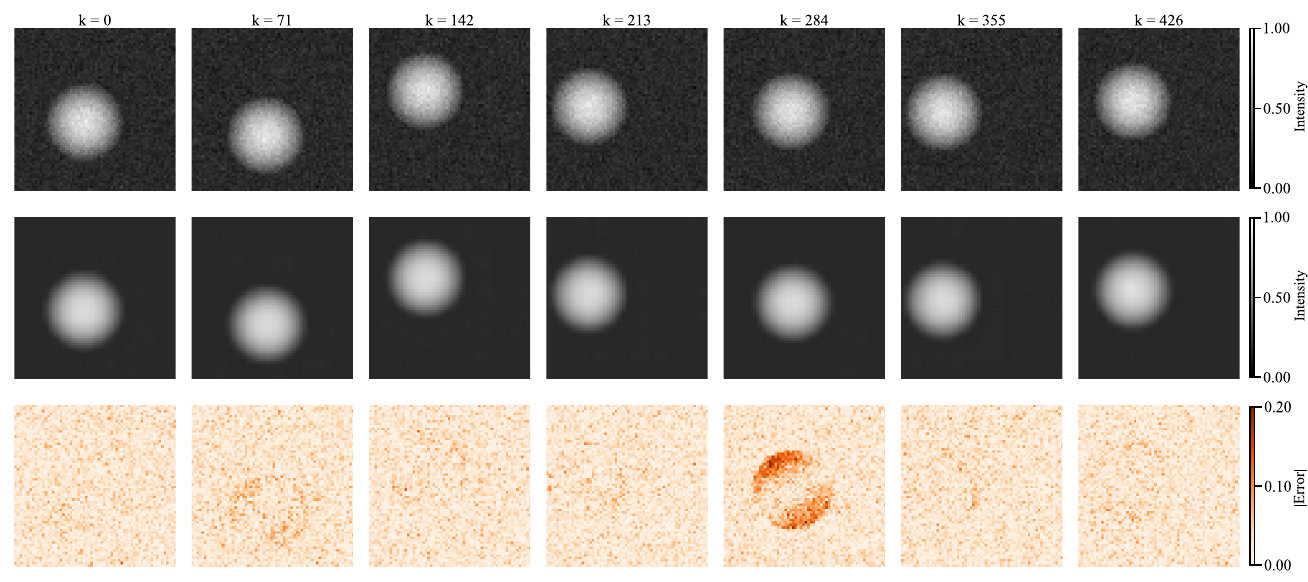}
    \caption{Comparison of (top) ground-truth frames, (middle) ROM-predicted outputs, and (bottom) absolute error at different time steps, obtained from randomly chosen initial conditions and input signal for the ball example.}
    \label{fig:eval_test}
\end{figure}

In addition to the end-to-end evaluation, we further assess the latent-space dynamics governed by \eqref{eq:affine-extended}.
Specifically, an initial sequence of camera frames is encoded into its corresponding latent representation.
This latent sequence is then combined with the input signal to construct the extended state vector $\boldsymbol{\xi}_k$.
Starting from this initialization, the model recursively predicts future latent states according to \eqref{eq:affine-extended}, where each predicted latent vector, together with the subsequent input, is used to update $\boldsymbol{\xi}_k$ and advance the dynamics.
Figure~\ref{fig:eval_latent_ball} presents the results of this evaluation, comparing the predicted latent trajectories with the ground-truth latent vectors.
The ground-truth vectors are obtained by encoding the corresponding frames of a randomly selected sequence from the test data.
Across 700 rollouts, the control-affine model demonstrates high accuracy in predicting latent dynamics, achieving an average RMSE of $1.82\times10^{-2}$ computed across all latent dimensions and time steps. Notably, this performance is attained without relying on the decoder to correct latent predictions. Furthermore, although the ROM was trained using only single-step rollouts, it successfully maintains high prediction accuracy over extended horizons.
\begin{figure}[t]
    \centering
    \includegraphics{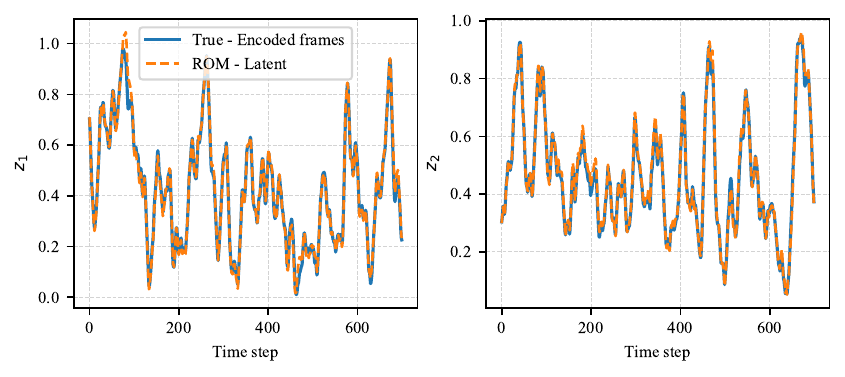}
    \caption{Comparison of the predicted latent dynamics using the control-affine model and the ground-truth encoded frames, obtained from randomly chosen initial conditions and input signal for the ball example.}
    \label{fig:eval_latent_ball}
\end{figure}

In Fig.~\ref{fig:eval_latent_autonomous}, we evaluate the autonomous dynamics by setting the input signal to zero. 
Ground-truth frames are generated by integrating the system in \eqref{eq:dyn-ball} from randomly chosen initial positions and velocities.
We compare the encoded ground-truth trajectories to those predicted by the control-affine latent model when only the drift network is active.
The results show good agreement between predicted and reference trajectories, with minor discrepancy near the steady state.
To further evaluate the autonomous dynamics, we identify the latent equilibrium of the trained drift network and compare it to the encoded physical equilibrium $\mathbf{z}^*=\mathcal{E}_\theta(\mathbf{x}^*)$, where $\mathbf{x}^*$ denotes the equilibrium frame identified from \eqref{eq:dyn-ball}, corresponding to the ball being at the center of the box, i.e., $(p_x^*,p_y^*)=(0.5,0.5)$.
The equilibrium state of the model is obtained by solving a fixed-point problem in the latent space.
Specifically, we seek a state $\Tilde{\mathbf{z}}^*$ that satisfies $\Tilde{\mathbf{z}}^* = \mathbf{a}_{\varphi_1}(\Tilde{\mathbf{z}}^*)$, where $\mathbf{a}_{\varphi_1}$ represents the trained drift network.
To this end, the latent vector $\Tilde{\mathbf{z}}^*$ is treated as an optimization variable and iteratively updated using the ADAM optimizer to satisfy the fixed-point condition.
The resulting absolute error between the identified and the true equilibrium is $[0.024, 0.002]$, confirming close agreement.
While the training framework presented in Sec.~\ref{sec:control-affine} could include an additional loss term corresponding to the equilibrium, the results show that the model closely identifies the equilibrium even without exposure to autonomous trajectories during training.
\begin{figure}[t]
    \centering
    \includegraphics{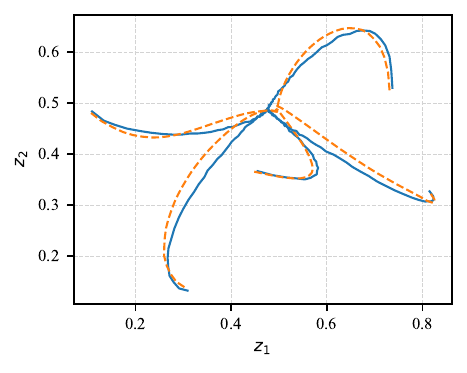}
    \caption{Comparison of the (dashed lines) predicted autonomous latent trajectories using the control-affine model and the (solid lines) ground-truth encoded frames, obtained from randomly chosen initial conditions for the ball example.}
    \label{fig:eval_latent_autonomous}
\end{figure}

\subsubsection{Comparison of control-affine and baseline ROMs}
Similar to the previous example, the performance of the control-affine ROM is compared against a baseline model with linear latent dynamics; however, in contrast to the previous case, both forced and autonomous simulations are considered here.
The results reported in Tab.~\ref{tab:CAvsLin} in terms of RMSEs are averages and standard deviations obtained from 100 simulations with randomly chosen initial conditions and input signals.
Our experiments show that the control-affine ROM outperforms the baseline model in the forced setting, achieving roughly a factor of two lower end-to-end RMSE and more than a threefold reduction in latent-space error.
Furthermore, the control-affine model also exhibits noticeably smaller standard deviations, indicating improved robustness with respect to random initialization.
In the autonomous setting, the end-to-end errors of both models are nearly identical, suggesting that the decoder of the baseline model is able to partially compensate for inaccuracies in the linear latent dynamics.
However, this effect is not reflected in the latent space, where the control-affine ROM achieves significantly lower errors and reduced variance.
It should be noted that both the baseline and control-affine ROMs were trained using the same loss function that is based on single step predictions ($M=1$), while their performance was evaluated through recursive simulations.
We believe that training the baseline ROM with $M-$step rollout loss akin to \cite{lusch2018deep} would improve its performance.
However, in order to enable a direct comparison with our model, we maintain the same training setup.
\begin{table}[ht]
\centering
\caption{Average RMSEs and standard deviations computed over 100 random initializations for both forced and autonomous simulations for the ball example.}
\label{tab:CAvsLin}
\begin{tabular}{lll}
\hline
 RMSE & Control-affine ROM & Baseline ROM \\
\hline
Forces End-to-end  & $3.47 \times 10^{-2} \pm 1.4\times10^{-3}$ & $6.03 \times 10^{-2} \pm 6.4\times10^{-3}$ \\
Forced Latent  & $1.57\times 10^{-2} \pm 2.8\times10^{-3}$ & $5.62 \times 10^{-2} \pm 1.1\times10^{-2}$ \\
Autonomous End-to-End  & $3.31\times 10^{-2} \pm 8.6\times10^{-4}$ & $3.33 \times 10^{-2} \pm 1.6\times10^{-3}$ \\
Autonomous Latent  & $1.25\times 10^{-2} \pm 1.7\times10^{-3}$ & $1.84 \times 10^{-2} \pm 4.4\times10^{-3}$ \\
\hline
\end{tabular}
\end{table}

\subsubsection{Control using feedback linearization}
We control the trajectory of the ball by leveraging the control-affine structure of the ROM as presented in Sec.~\ref{sec:FL}.
In this example, we demonstrate reference tracking control for a circular ball trajectory within the confined environment. 
The controller acts on the feedback-linearized latent dynamics through a proportional law, and the resulting virtual input is mapped back to the physical control input through the learned state-dependent transformation.
To respect the actuator limits, the applied control input is clamped to the interval $[-1,1]$.
This makes the closed-loop behavior exactly linear only during the unsaturated intervals, whereas the saturated portions introduce a piecewise nonlinear behavior.
This is consistent with the constrained-feedback-linearization literature, which emphasizes that simple input bounds become state dependent after the nonlinear transformation introduced by feedback linearization.
For the reported simulation, clamping was active for approximately $30\%$ of the time steps, which quantifies the departure from the ideal linear regime.
Notably, the matrix $\widetilde{\mathbf{B}}_{\varphi_2}(\boldsymbol{\xi}_k)$ remained full rank at all inversion steps in this experiment, despite the absence of an explicit rank-promoting term in the training loss.
As shown in Fig.~\ref{fig:ref-tracking}, the controller successfully drives the latent states to track the encoded reference trajectory with minimal error.

Although not applied here, the broader feedback-linearization literature provides methods for handling input constraints.
For example, \cite{deng2009input} study input-constraint handling in an MPC/feedback-linearization setting by transforming the feasible input region, while \cite{tiriolo2024feedback} derive state-dependent input constraints for a feedback-linearized model and construct a time-invariant inner approximation to preserve feasibility.
These works suggest that constraint-aware control of control-affine systems can often be reformulated in a linear or linearly constrained form in suitable coordinates.
In our setting, however, the drift and input maps are represented by neural networks, so adapting these methods to the proposed ROM architecture is not immediate and is therefore left for future work.
Nevertheless, the existence of such results further motivates learning reduced-order models with an explicit control-affine structure.


\begin{figure}[t]
    \centering
    \includegraphics{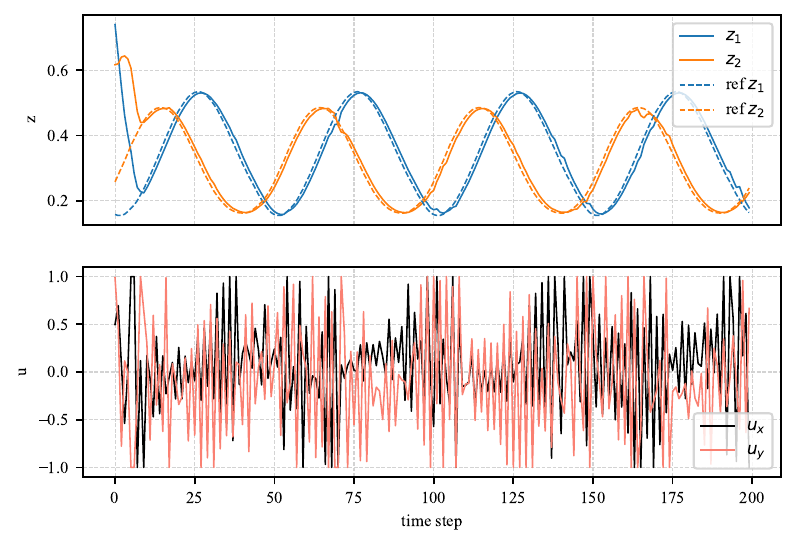}
    \caption{Latent space reference tracking using P control in combination with feedback linearization (upper). A high-dimensional circular ball trajectory is encoded to a latent reference (dashed lines). The controller drives the latent states to track this reference while generating the necessary control signals (lower).}
    \label{fig:ref-tracking}
\end{figure}

\section{Conclusions}
\label{sec:conclusions}
In this work, a data-driven framework for learning control-affine ROMs via AEs has been presented.
The framework identifies a low-dimensional manifold in which the system dynamics are well-approximated by a control-affine latent model.
This is achieved by formulating a multi-objective loss function for end-to-end training, i.e., simultaneously training the autoencoder and the dynamical model.
To improve prediction accuracy, we formulate the dynamical model to operate on latent sequences while preserving the control-affine structure.
In addition, the framework accommodates high-dimensional inputs—such as those arising from spatially distributed controls—through the optional integration of an input autoencoder, which is trained jointly with the other components.

Our work builds upon related work that employs AEs to learn latent dynamics with predefined structure, e.g., linear or sparsely nonlinear dynamics. 
We demonstrate, however, that the control-affine latent dynamics yields more accurate results compared to the linear latent dynamics in our numerical examples.
In addition, the control-affine structure is well-studied in control-theory with decades of theoretical and practical insights that are less readily available for general nonlinear latent representations.
Specifically, we motivate here the adoption of control-affine dynamics by demonstrating their use in feedback linearization to track the latent states to a desired reference trajectory. 

To this end, we showed that full-state feedback linearization is achievable by setting the latent dimension $r$, which is a hyperparameter of the ROM, less than or equal to the number of control inputs $m$.
Yet, this requirement may be restrictive in practice, especially for very high-dimensional systems with only a few actuators.
In such settings, a control-affine ROM with a larger latent dimension can still be learned accurately, but only partial latent feedback linearization becomes feasible.
This case poses several challenges: the control objective cannot be directly encoded in the latent space, and identifying which latent coordinates correspond to actuated directions is difficult without additional regularization. 
In the future, we aim to address these limitations by explicitly decomposing the latent dynamics into controllable, feedback-linearizable subspace and an internal, unactuated subspace.
Specifically, we aim to expand the framework to promote controllability of the learned control-affine sub-latent dynamics and automatically identify the latent coordinates most relevant for control.

Finally, we provide an open-source Python package, DeepE2EROM, which implements the end-to-end training framework developed in this work.
The package offers a high-level interface for constructing reduced-order models directly from data, enabling seamless integration of state and input autoencoders with a variety of latent dynamical models.
The control-affine latent dynamics structure introduced in this paper is fully supported, together with linear dynamics models, LSTM-based nonlinear dynamics, and user-defined custom architectures.
DeepE2EROM thus serves as a flexible research tool for developing end-to-end ROMs across a wide range of applications.

\section*{Data accessibility}
The developed framework and the data used are made available at: https://github.com/mjalled/DeepE2EROM.

\section*{Acknowledgment}
Funded by the Deutsche Forschungsgemeinschaft (DFG, German Research Foundation) - Project-ID 422037413 - TRR 287.

\bibliographystyle{plainnat}
\bibliography{references}

\end{document}